\documentclass[a4paper,11pt]{amsart}
\usepackage[english]{babel}
\usepackage{hyperref}

\newtheorem{theorem}{Theorem}[section]
\newtheorem{lm}[theorem]{Lemma}
\newtheorem{corollary}[theorem]{Corollary}
\newtheorem{prop}[theorem]{Proposition}
\theoremstyle{definition}

\theoremstyle{remark}
\newtheorem{remark}[theorem]{Remark}

\usepackage{amssymb}

\usepackage{epsfig}
\usepackage{subfigure}

\def\SSN{\SS^{N-1}}

\newcommand{\NN}{\mathbb{N}}
\newcommand{\RR}{\mathbb{R}}

\newcommand{\lam}{\lambda}

\newcommand{\ep}{\varepsilon}

\newcommand{\de}{\delta}

\newcommand{\al}{\alpha}
\newcommand{\si}{\sigma}
\newcommand{\ga}{\gamma}

\newcommand{\Om}{\Omega}
\newcommand{\pa}{\partial}
\newcommand{\RN}{\RR^N}
\newcommand{\ovr}{\overline}

\newcommand{\De}{\Delta}

\def\NN{\mathbb N}
\def\SS{\mathbb S}

\def\cH{\mathcal{H}}

\def\cC{\mathcal{C}}

\def\RR{\mathbb R}
\def\SS{{\mathbb S}}
\def\RN{{\mathbb R^N}}

\def\Om{\Omega}
\def\De{\Delta}

\def\Si{\Sigma}
\def\al{\alpha}
\def\be{\beta}
\def\ga{\gamma}
\def\la{\lambda}
\def\La{\Lambda}

\def\om{\omega}
\def\pa{\partial}

\def\de{\delta}
\def\th{\theta}

\newcommand{\zi}{\zeta}

\def\ds{\displaystyle}
\def\ovr{\overline}

\newcommand{\pusn}{\big[u_\nu \big]_{\pa \Om}}

\DeclareMathOperator{\dist}{\mathrm{dist}}

  \begin{document}
    \title[]{H\"{o}lder stability for \\ Serrin's 
overdetermined problem}


\author[G. Ciraolo]{Giulio Ciraolo}
\address{Dipartimento di Matematica e
Informatica, Universit\`a di Palermo, Via Archirafi 34, 90123 Palermo, Italy.}
\email{giulio.ciraolo@unipa.it}
\urladdr{http://www.math.unipa.it/~g.ciraolo/}

\author[R. Magnanini]{Rolando Magnanini}
    \address{Dipartimento di Matematica ed Informatica ``U.~Dini'',
Universit\` a di Firenze, viale Morgagni 67/A, 50134 Firenze, Italy.}
    \email{magnanin@math.unifi.it}
    \urladdr{http://web.math.unifi.it/users/magnanin}


\author[V. Vespri]{Vincenzo Vespri}
    \address{Dipartimento di Matematica ed Informatica ``U.~Dini'',
Universit\` a di Firenze, viale Morgagni 67/A, 50134 Firenze, Italy.}
    \email{vespri@math.unifi.it}
    \urladdr{http://web.math.unifi.it/users/vespri}

    \keywords{Serrin's problem,
overdetermined problems, method of moving planes, stability, stationary surfaces, Harnack's inequality}
    \subjclass{Primary 35B06, 35J05, 35J61; Secondary 35B35, 35B09}
\begin{abstract}
In a bounded domain $\Om$, we consider a positive solution of  the problem $\De u+f(u)=0$ in $\Om$,
$u=0$ on $\pa\Om$, where $f:\RR\to\RR$ is a locally Lipschitz continuous function. Under sufficient conditions on $\Om$ (for instance, if $\Om$ is convex), we show that $\pa\Om$
is contained in a spherical annulus of radii $r_i<r_e$, where $r_e-r_i\le C\,[u_\nu]_{\pa\Om}^\tau$ for some constants $C>0$ and $\tau\in (0,1]$. Here, $[u_\nu]_{\pa\Om}$ is the Lipschitz seminorm on $\pa\Om$ of the normal derivative of $u$. This result improves to H\"older stability the logarithmic estimate obtained in \cite{ABR} for Serrin's overdetermined problem. It also extends to a large class of semilinear equations the H\"older estimate obtained in \cite{BNST2} for the case of torsional rigidity ($f\equiv 1$) by means of integral identities. The proof hinges on ideas contained in \cite{ABR} and uses 
Carleson-type estimates and improved Harnack inequalities in cones.
\end{abstract}


\maketitle

\section[Introduction]{Introduction}
Serrin's overdetermined problem has been the object of many investigations.
In its classical form, it involves a sufficiently smooth bounded domain $\Om$ in $\RN$ and a
classical solution of the set of equations:
\begin{eqnarray}
&\Delta u + f(u) = 0 \ \mbox{ and } \
u\ge 0 \ \mbox{ in } \ \Om, \ u=0 \ \mbox{ on } \ \pa \Om,
\label{serrin1}
\\
&u_\nu=c \ \mbox{ on } \ \pa \Om.
\label{serrin2}
\end{eqnarray}
Here, $f:[0,+\infty)\to\RR$ is a locally Lipschitz continuous function, $u_\nu$ denotes the {\it inward} normal derivative of $u$ and $c$ is a positive constant. Under these assumptions, Serrin \cite{Se} proved that $\Om$ must be a ball and $u$ be radially symmetric. For his proof, he adapted and improved a method created by Aleksandrov to prove his {\it Soap Bubble Theorem} (see \cite{Al}). 
In fact, the radial symmetry of $\Om$ and $u$ is obtained by the so-called {\it method of moving planes}
based upon the observation that the euclidean ball is the only bounded domain that is symmetric with respect to any hyperplane passing through its center of mass. There are other interesting proofs of this symmetry result, based on integral identities, that generally need severe restrictions on $f$ (see for instance \cite{We}, \cite{PS}, \cite{BNST}).
\par
Overdetermined problems like \eqref{serrin1}-\eqref{serrin2} arise in many physical and geometric situations; they can be seen as a prototype of inverse problems and often emerge in free boundary and shape optimization problems (see \cite{He}). 
\par
Despite the intense research that has been devoted to them for almost five decades, there are still many open problems. An important one  --- the focus of this paper --- concerns the study of the stability of the radial configuration.  
\par
The first contribution
in this direction is \cite{ABR}, where
it is proved that if one assumes that $u_\nu$ is \emph{almost} constant on $\pa \Om$, then there exist two concentric balls $B_{r_i}$ and $B_{r_e}$, with 
\begin{equation}
\label{concentric balls}
B_{r_i} \subset \Om \subset B_{r_e}
\end{equation} 
and such that 
$r_e-r_i$ can be bounded in terms of some measure of the deviation of $u_\nu$ from being a constant. More precisely, in \cite{ABR} it is proved the estimate
\begin{equation} 
\label{stability ABR}
r_e-r_i\le C\,|\log  \left\|u_\nu-d\|_{C^1(\pa \Om)} \right|^{-1/N},
\end{equation}
where $d$ is some given constant, provided $\| u_\nu-d\|_{C^1(\pa \Om)}$ is sufficiently small; here, $C$ is a constant depending on $N$, the regularity of $\pa\Om$, the diameter of $\Om$ and the Lipschtz constant of $f$.
The proof is based on a quantitative study of the method of moving planes and works for a general locally Lipschitz non-linearity $f$.
\par
In the case of the {\it torsional rigidity} problem, that is when $f\equiv 1$, \eqref{stability ABR} was improved in \cite{BNST2} (see also \cite{BNST3} for Monge-Amp\`ere equations). Indeed, the authors replace the logarithmic dependence at the right-hand side of \eqref{stability ABR} 
by a power law of H\"older type. Furthermore, they also give a stability estimate in terms of the $L^1$-norm of the deviation instead of its $C^1$-norm.
\par
In our main result, Theorem \ref{stability serrin}, we will show that the logarithmic estimate
\eqref{stability ABR} can be improved to obtain a stability of H\"older type. In order to avoid unnecessary technicalities, in this section we present our result in a relevant particular case. In what follows, we denote by $d_\Om$ the {\it diameter} of $\Om$, $r_\Omega$ the radius of the optimal interior touching sphere to $\pa\Om$ (see Section \ref{sec:MovPlanes} for its precise definition), and use the following notation:
\begin{equation}
\label{seminorm normal derivative}
\pusn = \sup_{\substack{x,y \in \pa \Om\\ \ x \neq y}} \frac{|u_\nu(x) - u_\nu(y)|}{|x-y|}.
\end{equation}

\begin{theorem}
\label{th:stability-convex}
Let $\Om \subset \RR^N$ be a convex domain with boundary of class $C^{2,\al}$
and let $f$ be a locally Lipschitz continuous function such that $f(0)\ge 0$.  
Let $u\in C^{2,\al}(\ovr{\Om})$ be a solution of \eqref{serrin1}.
\par
There exist two positive numbers $\ep$, $C$, that depend on 
\begin{eqnarray*}
\ds N, f, d_\Om, r_\Om, \mbox{the $C^{2,\al}$-regularity of } \Om, \max_{\ovr{\Om}}u, \min_{\pa\Om} u_\nu, 
\end{eqnarray*}
and a number $\tau\in (0,1)$ such that, if $[u_\nu]_{\pa \Om} \leq \ep$,
then 
\begin{equation*}
B_{r_i} \subset \Om \subset B_{r_e},
\end{equation*}
where $B_{r_i}$ and $B_{r_e}$ are two concentric balls and their radii satisfy 
\begin{equation} 
\label{stability-improved}
r_e-r_i \leq C\,[u_\nu]_{\pa \Om}^{\tau}.
\end{equation}
\end{theorem}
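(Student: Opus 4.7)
The plan is to adapt the quantitative method of moving planes of \cite{ABR}, replacing the Harnack chain that is responsible for the logarithmic factor in \eqref{stability ABR} by two sharper tools --- Carleson-type estimates and Harnack inequalities in cones --- both of which are available in a convex domain.

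First, fix a direction $\omega\in\SSN$ and run the moving planes procedure with hyperplanes $\pi_\omega^\lambda = \{x\cdot\omega=\lambda\}$, stopping at the critical value $\lambda^*=\lambda^*(\omega)$. Let $\Sigma^*$ denote the reflection across $\pi_\omega^{\lambda^*}$ of the cap $\{x\in\Om:x\cdot\omega>\lambda^*\}$, and set $w = u^{\lambda^*} - u$ on $\Sigma^*$, where $u^{\lambda^*}(x) = u(2\lambda^*\omega-x)$. Then $w\geq 0$ and $\Delta w + c(x)\,w = 0$ on $\Sigma^*$, with $c(x)$ bounded in terms of the Lipschitz constant of $f$ on $[0,\max u]$. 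At $\lambda^*$ one is in one of the two classical Serrin alternatives: either $\pa\Sigma^*$ is internally tangent to $\pa\Om$ at an interior point $P$ (case A), or $\pi_\omega^{\lambda^*}$ is orthogonal to $\pa\Om$ at some point $Q$ (case B). A quantitative Hopf/Serrin-corner computation, comparing the values and normal derivatives of $u$ and $u^{\lambda^*}$ near $P$ or $Q$ and using $\pusn\leq\ep$, yields a bound $w(\hat x)\leq C\,\pusn$ at a chosen reference point $\hat x\in\Sigma^*$ whose distance from $\pa\Om$ is bounded below in terms of $r_\Om$ and $d_\Om$.

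The heart of the proof, and its main departure from \cite{ABR}, lies in propagating this pointwise bound to an $L^\infty$ bound on $\Sigma^*$. In \cite{ABR} one follows a chain of Harnack balls that must traverse a region of thickness comparable to $\pusn$: the number of links grows like $|\log\pusn|$, which eventually forces the logarithmic dependence in \eqref{stability ABR}. The strategy here is to bypass that thin strip altogether. Convexity of $\Om$, together with the lower bound on $r_\Om$, makes it possible to anchor cones of fixed aperture inside $\Om$ at every boundary point; on such cones an improved Harnack inequality (with constant independent of how close the cone tip is to $\pa\Om$) is available, and combining it with a Carleson-type estimate that controls the ratio $w/v$ near $\pa\Om$ against a suitable positive barrier $v$ converts the logarithmic loss into a polynomial one, delivering $\max_{\Sigma^*}w\leq C\,\pusn^\tau$ for some $\tau\in(0,1)$ depending only on the admissible data.

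Finally, pass from $w$ to geometry: since $\min_{\pa\Om}u_\nu>0$, a mean value argument turns the $L^\infty$ bound on $w$ into a bound on the Hausdorff distance between $\pa\Om\cap\{x\cdot\omega<\lambda^*\}$ and its reflection across $\pi_\omega^{\lambda^*}$, so that moving the hyperplane by $O(\pusn^\tau)$ past $\lambda^*$ still leaves $w\geq 0$. Repeating this for $\omega$ and $-\omega$, and then letting $\omega$ range over $\SSN$ with a common center given by the intersection of the critical hyperplanes corresponding to $N$ linearly independent directions, one obtains concentric balls $B_{r_i}\subset\Om\subset B_{r_e}$ with $r_e-r_i\leq C\,\pusn^\tau$. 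The main obstacle is the propagation step described above: the constants appearing in the cone-Harnack and Carleson estimates must depend only on the listed geometric data and remain bounded as $\pusn\to 0$, and verifying this is precisely where convexity and the quantitative boundary regularity encoded by $r_\Om$ are essential.
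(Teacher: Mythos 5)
Your plan follows essentially the same route as the paper (quantitative moving planes, Carleson/boundary--Harnack to enlarge a first smallness estimate, and Harnack chains inside cones to replace the exponential blowup of \cite{ABR} by a polynomial one), and your identification of these as the two new tools is accurate. However, there is a genuine gap in the propagation step, which you describe as delivering $\max_{\Sigma^*}w\leq C\,\pusn^\tau$ uniformly over the whole (reflected) cap.

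That uniform bound does not hold and cannot be the conclusion of the cone-Harnack argument. The function $w(x)=u(x^\la)-u(x)$ vanishes on $\pi_\la$ and at the Serrin contact point, but on $\pa\Om_\la\cap\pa\Om$ away from that point one has $w=u(x^\la)$, which may be of order $\max_{\ovr\Om}u$ whenever the reflected cap $\Om^\la$ sits well inside $\Om$; thus $\sup_{\Om_\la}w$ is in general not small at all. What the cone Harnack chain actually yields (after the Carleson step has controlled $w$ on a fixed interior set $G_\la$) is a $\de$-dependent estimate of the form
\begin{equation*}
\|w\|_{L^\infty(\Sigma_\de)} \leq C\,\de^{-\ga}\,\pusn,
\end{equation*}
where $\Sigma_\de$ is the part of the cap at distance $\ge\de$ from $\pa\Om$, and $\ga$ comes from the number of links in a Whitney-type chain inside a cone of fixed aperture, which is $\sim\log(1/\de)$. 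The blowup factor $\de^{-\ga}$ is unavoidable; the improvement over \cite{ABR} is that it is polynomial rather than exponential in $1/\de$. The H\"older exponent $\tau=1/(1+\ga)$ only appears \emph{afterwards}, when one combines this with the linear growth $\underline K\,\dist(x,\pa\Om)\le u(x)\le\ovr K\,\dist(x,\pa\Om)$ and chooses $\de$ and $\si$ so that $\underline K\si > C\de^{-\ga}\pusn+\ovr K\de$; optimizing gives $\de\sim\pusn^{1/(1+\ga)}$ and hence $\si\sim\pusn^{1/(1+\ga)}$. Skipping this balance and claiming the H\"older power already at the level of $\sup w$ is a mistake that would also lead you to misdiagnose \cite{ABR}: there the chain length grows like a power of $1/\de$ (not like $|\log\pusn|$), and it is the exponential $\de$-dependence, fed into the same balance, that produces the logarithm. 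To repair your write-up, keep the $\de$-dependent $L^\infty$ estimate on $\Sigma_\de$ as the output of the Carleson-plus-cone-Harnack step, and insert the balancing lemma between that estimate and the geometric conclusion; the rest of your outline (reflecting across $N$ orthogonal critical hyperplanes to locate an approximate center) is consistent with the paper's proof.
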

 The number $\tau$ can be explicitly determined (see Theorem \ref{stability serrin}, for details). 
Notice that the semi-norm \eqref{seminorm normal derivative} can be bounded in terms of the deviation used in \eqref{stability ABR}, if we choose $d$ as the minimum of $u_\nu$ on $\partial\Om$.
\par
Therefore, \eqref{stability-improved} significantly improves the estimates of \cite{ABR} and \cite{BNST2}, since it enhances the stability from logarithmic, as in \eqref{stability ABR}, to that of H\"older type proved in \cite{BNST2}, but for {\it any locally Lipschitz non-linearity $f$} (that is, not only for the case $f\equiv 1$).
\par
The assumption on the convexity of $\Om$ can be slightly relaxed (see Theorem \ref{stability serrin}), by requiring that $\Om$ be what we call a $(\cC,\th)${\it -domain}. Roughly speaking, we require that every maximal cap that comes about in employing the method of moving planes has a boundary with a Lipschitz constant bounded uniformly with respect to the direction of mirror reflection chosen (see Section \ref{sec:MovPlanes} for details). 
\par
As already mentioned, our results and the ones in \cite{BNST2} are obtained by using very different techniques. The starting point of \cite{BNST2} is the proof of symmetry given by the same authors in \cite{BNST} which makes use of information, such as Newton inequalities and Pohozaev identity, which is in a sense more global and avoids an extensive use of maximum principles needed to employ the method of moving planes. On the other hand, the method in \cite{BNST2} is more restrictive, since it seems to be suitable only for $f$ constant. 
\par
Instead, based on Serrin's original proof,  our approach is more flexible and allows to treat a {\it general 
Lipschitz continuous non-linearity $f$}; in fact, our study relies on quantitative versions of the maximum principle, such as (global) Harnack-type inequalities. Thus, the quality of the relevant stability estimate is affected by that of the Harnack inequality we employ. This is the reason why, at this stage, even if we can consider a general non-linearity, we need to put a restriction on the type of domain under study. In particular, for convex and $(\cC,\th)${-domains}, we can use improved Harnack-type inequalities in every cap which is generated by the method of moving planes.
\par
Besides by those in \cite{ABR}, the techniques used in this paper are inspired by those employed in \cite{CMS2} (see also \cite{CMS}), where a quantitative study of the radially symmetric configuration was carried out for a related problem --- the {\it parallel surface problem} --- motivated by the remark, made in \cite{MSaihp} (see also \cite{MS1}, \cite{MSm2as}), that {\it time-invariant level surfaces} of solutions of certain nonlinear non-degenerate {\it fast diffusion} equations are parallel surfaces.
\par
As in \cite{ABR} and \cite{CMS2}, our approach consists in fixing a direction, defining an approximate set $X(\de)$ --- built upon the so-called maximal cap (see Section \ref{sec:MovPlanes}) and its mirror-symmetric image in that direction --- which fits $\Om$ well, as the parameter $\de$ tends to $0$. This approximation process is controlled in terms of $\pusn$ and does not depend on the  particular direction chosen. 
\par
The application of Harnack's inequality and Carleson estimates in the maximal cap plays a crucial role in obtaining the new stability estimates. Since we are assuming that every maximal cap has Lipschitz regularity, the improvement in \eqref{stability-improved} is obtained by a refinement of Harnack's inequality in suitable cones.
\par
The paper is organized as follows. In Section \ref{sec:MovPlanes}, we recall the notations
and preliminaries necessary to use the method of moving planes. There, we also discuss on  
the definition of $(\cC,\th)$-domains and present our main result, Theorem \ref{stability serrin}, 
of which Theorem \ref{th:stability-convex} is a straightforward corollary.
In Section \ref{sec:enhanced harnack} --- the core of the paper --- we begin the proof of Theorem \ref{stability serrin}, by producing the necessary enhanced Harnack estimates. Finally, in Section \ref{sec:stability}, we complete the proof of \eqref{stability-improved}.

\setcounter{equation}{0}

\section{Remarks on the method of moving planes and\\ statement of the main result} 
\label{sec:MovPlanes}
We consider a bounded domain $\Om$ of class $C^{2,\alpha}$, $0<\alpha \leq 1$. 
In what follows we will often use the notation $C(N, \Om, d_\Om, r_\Om, f, \dots)$ to denote a constant that
depends on the relevant parameters; in particular, the dependence
on $\Om$ is meant to be only on the $C^{2,\al}$ regularity of $\pa\Om$, as explained in \cite[Remark 1]{ABR}.
The assumed regularity of $\Om$ implies that $\Om$ satisfies a {\it uniform interior sphere condition}; for $x \in \pa \Om$, we let $r(x)$ be the radius of the maximal ball $B \subset \Om$ with $x \in \pa B$ and set
$$
r_\Om = \min_{x \in \pa \Om} r(x).
$$ 
\par
In the sequel, it will be useful to consider the {\it parallel set} of $\Om$:
\begin{equation} \label{Om de}
\Om(\de) = \{x \in \Om:\ \dist(x,\pa \Om)>\de\}.
\end{equation}
If $0<\de<r_\Om$, $\pa\Om(\de)$ is $C^{2,\alpha}$-smooth; also, $\Om(\de)$ satisfies a uniform interior sphere condition with optimal radius $r_\Om-\de$.
 \par
For a unit vector $\om\in\RN$ and a parameter $\mu\in\RR$, we define the following objects:
\begin{equation}
\label{definitions}
\begin{array}{lll}
&\pi_{\mu}=\{ x\in\RN: x\cdot\om=\mu\}\ &\mbox{a hyperplane orthogonal to $\om,$}\\
& \mathcal{H}_\mu=\{ x\in\RN: x\cdot\om>\mu\}\ &\mbox{the half-space \emph{on the right} of $\pi_\mu$,}\\
&\Om_{\mu}=\{x\in A: x\cdot\om>\mu\}\ &\mbox{the right-hand cap of $\Om$},\\
&x^{\mu}=x-2(x\cdot\om-\mu)\,\om\ &\mbox{the reflected image of $x$ in $\pi_{\mu},$}\\
&\Om^{\mu}=\{x\in\RN:x^{\mu}\in \Om_{\mu}\}\ &\mbox{the reflected cap in $\pi_{\mu}$.}
\end{array}
\end{equation}
Set $\La=\sup\{x\cdot\om: x\in \Om\}$, the {\it extent} of $\Om$ in direction $\om$; if $\mu<\La$ is close to $\La$, the reflected cap $\Om^\mu$ is contained in $\Om$ (see \cite{Fr}). Set
\begin{equation}
\label{lambda def}
\la=\inf\{\mu: \Om^{\mu'}\subset \Om \mbox{ for all } \mu'\in(\mu,\La)\}.
\end{equation}
Then at least one of the following cases occurs (see \cite{Se}, \cite{Fr}):
 \begin{enumerate}
\item[(S1)]
$\Om^{\la}$ becomes tangent to $\pa \Om$ at some
point $P^\la\in\pa \Om^\lam \setminus\pi_{\la}$, that is the reflected image of a point $P \in \pa \Om_\lam \setminus\pi_{\la}$;
\item[(S2)]
$\pi_{\la}$ is orthogonal to $\pa \Om$ at some point $Q\in\pa \Om\cap\pi_{\la}.$
\end{enumerate}
The cap $\Om_\la$ will be called the {\it maximal cap}.
\par
As customary in the method of moving planes, we define the function
\begin{equation}\label{w def}
w(x)= u(x^\la)-u(x),\quad x\in \Om_\la;
\end{equation}
$w$ satisfies the equation
\begin{equation} 
\label{eq w semilinear}
\De w+c(x)\,w=0 \  \mbox{ in } \Om_\la,
\end{equation}
where for $x\in\Om_\la$
\begin{equation*}
\label{defc}
c(x)=\left\{
\begin{array}{lll}
\ds\frac{f(u(x^\la))-f(u(x))}{u(x^\la)-u(x)} &\mbox{ if } u(x^\la)\not= u(x),\\
\ds 0 &\mbox{ if } u(x^\la) = u(x).
\end{array}
\right.
\end{equation*}
Notice that $c(x)$ is bounded by the {\it Lipschitz constant} $\frak{L}_f$ of $f$ in the interval
$[0,\max\limits_{\ovr{\Om}}u].$
\par
All the improved estimates in Section \ref{sec:enhanced harnack} concern $w$. As proved in \cite{Se} and refined in \cite{BNV} (see also \cite{Fr}), since $w\ge 0$ on $\pa\Om_\la$, we can assume that $w\ge 0$ in $\Om_\la$. Hence, a standard application of the strong maximum principle to the inequality $\De w-c^-(x)\, w\le 0$ with $c^-(x)=\max[-c(x),0]$ shows that either $w=0$ in $\Om_\la$ (and $\Om$ and $u$ are symmetric about $\pi_\la$) or 
$$
w>0 \ \mbox{ in } \ \Om_\la.
$$

The following lemma ensures that the maximal cap always contains a half ball tangent to $\pa\Om$
at either point $P$ or $Q$.

\begin{lm} \label{lemma ball ABR}
Let $P$ and $Q$ be as in case (S1) and (S2), respectively. Let $B_\rho(p)\subset\Om$ be a ball with
$0<\rho\le r_\Om$ and such that $P\in\pa B_\rho(p)$ or $Q\in\pa B_\rho(p)$.
\par
Then, $p\in \ovr{\Om}_\la$ and $B_\rho(p) \cap \mathcal{H}_\la \subset \Om_\la$.
\end{lm}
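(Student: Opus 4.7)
The inclusion $B_\rho(p) \cap \mathcal{H}_\la \subset \Om_\la$ is immediate from $B_\rho(p) \subset \Om$ and the definition $\Om_\la = \Om \cap \mathcal{H}_\la$. The only nontrivial statement is $p \in \ovr{\Om}_\la$, i.e., $p \cdot \om \ge \la$. In both cases, the hypothesis $\rho \le r_\Om$ combined with the uniform interior sphere condition forces $B_\rho(p)$ to be the (unique) interior tangent ball to $\pa \Om$ at the prescribed boundary point, so that $p = P + \rho \nu_P$ in case (S1) and $p = Q + \rho \nu_Q$ in case (S2), where $\nu$ denotes the inward unit normal.

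Case (S2) is direct. The condition that $\pi_\la$ is orthogonal to $\pa \Om$ at $Q$ amounts to $\nu_Q \cdot \om = 0$, so using $Q \in \pi_\la$ one gets $p \cdot \om = Q \cdot \om + \rho\, \nu_Q \cdot \om = \la$. Thus $p \in \pi_\la \cap \Om \subset \ovr{\Om}_\la$.

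Case (S1) is the substantive part; I would argue by contradiction. Set $a := P \cdot \om - \la$, which is strictly positive since $P \notin \pi_\la$. Suppose $p \cdot \om < \la$, an assumption equivalent to $\rho\,(\nu_P \cdot \om) < -a$. Using $P - P^\la = 2a\, \om$, one computes
\[
|p - P^\la|^2 \;=\; |\rho \nu_P + 2 a\, \om|^2 \;=\; \rho^2 + 4 a \rho\, (\nu_P \cdot \om) + 4 a^2.
\]
Multiplying $\rho\,(\nu_P \cdot \om) < -a$ by $4a > 0$ gives $4 a \rho\, (\nu_P \cdot \om) < -4 a^2$, whence $|p - P^\la|^2 < \rho^2$. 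Therefore $P^\la$ lies in the open ball $B_\rho(p) \subset \Om$, contradicting $P^\la \in \pa \Om$, which holds by the defining tangency of case (S1).

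The only real observation is that the contradiction hypothesis collapses the cross term against the $4a^2$ term in the expansion of $|p - P^\la|^2$; after that the conclusion is one line of algebra. No property of $u$, $f$, or of the moving-planes procedure beyond the geometric definitions of (S1)--(S2) enters: the statement is purely geometric and rests only on the uniform interior sphere condition and on the reflection identity $P - P^\la = 2(P \cdot \om - \la)\om$.
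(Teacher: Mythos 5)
Your proof is correct and uses the same central observation as the paper: $P^\la\in\pa\Om$ cannot lie in the open ball $B_\rho(p)\subset\Om$, which forces $|p-P^\la|\ge\rho=|p-P|$ and hence $p\cdot\om\ge\la$. The only cosmetic difference is that you route the algebra through the identification $p=P+\rho\nu_P$ (a legitimate consequence of tangency, but not actually needed: expanding $|p-P^\la|^2=|p-P+2a\om|^2$ with only $|p-P|=\rho$ gives the same conclusion, which is in effect what the paper does in coordinates).
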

\begin{proof}
The assertion is trivial for case (S2). 
\par
If case (S1) occurs, without loss of generality, we can assume that $\om=e_1=(1, 0,\dots, 0)$ and $\la=0$. Since (S1) holds, the point $P^\la$ lies on $\pa\Om$ and cannot fall inside $B_\rho(p)$, since $P\in \pa B_\rho (p)$ and $B_\rho (p)\subset\Om$. Thus,  $|p-P^\la|\geq \rho=|p-P|$ and hence 
$|p_1+P_1| \geq |p_1-P_1|$, which implies that $p_1 \geq 0$, being $P_1>0$.
\end{proof}

As mentioned in the Introduction, the convexity assumption of Theorem \ref{th:stability-convex} can be relaxed; with this purpose, we introduce the class of $(\mathcal{C}, \theta)$-domains that, roughly speaking, have the property that every maximal cap $\Om_\la$ has a boundary with a Lipschitz constant bounded by a number which does not
depend on the direction of reflection chosen.
To this aim, for a fixed $0<t<1/2$, by formula \eqref{Om de}, we define the set
\begin{equation}\label{G def}
G= \Om(t\,r_\Om);
\end{equation}
we know that $G$ is connected (see \cite[pp. 923--924]{ABR}).
Thus, we say that $\Om$ is a {\it $(\mathcal{C}, \theta)$-domain} if 
there exists $\theta > 0$ such that for any direction $\om$ and for any $x\in \Om_\la\setminus\ovr{G}_\la$ 
there exists $\xi \in \pa G_\la \setminus \pi_\la$ such that $x$ and $\xi$ belong to the axis of a (finite) right spherical cone $\mathcal{C} \subset \Om_\la$ with vertex at $x$ and aperture $2\theta$.
\par
This property is not easy to check, since it relies on the knowledge of the position of the critical hyperplane (see Fig.\ref{fig1}). However, the class of $(\mathcal{C}, \theta)$-domains is not empty, as shown by the following proposition.

\begin{figure}[h]
\centering
 \subfigure[A $(\mathcal{C}, \theta)$-domain.]{\includegraphics[width=0.5\textwidth]{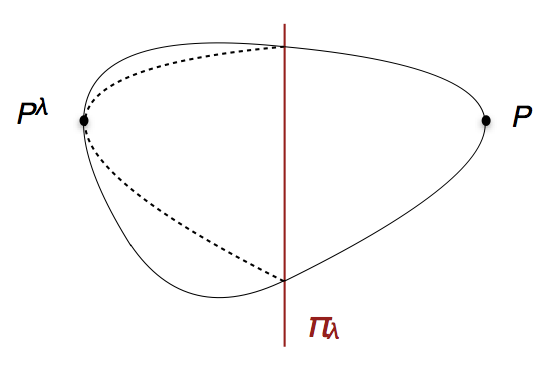}}
 \hspace{2em}
 \subfigure[Not a $(\mathcal{C}, \theta)$-domain.]{\includegraphics[width=0.4\textwidth]{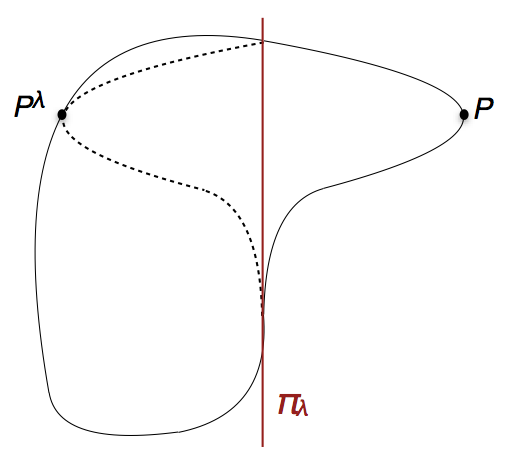}}
  \caption{Every maximal cap of a $(\mathcal{C}, \theta)$-domain has Lipschitz boundary.} \label{fig1}
\end{figure}


\begin{prop} 
\label{proposition cone}
Any convex domain $\Om$ of class $C^1$ and satisfying the uniform interior sphere condition with
minimal radius $r_\Om$ is a $(\mathcal{C}, \theta)$-domain with
\begin{equation}
\label{theta}
\theta\ge \arctan \frac{(1-t)\,r_\Om}{2\,d_\Om},
\end{equation}
where $t$ is the parameter appearing in \eqref{G def}.
\end{prop}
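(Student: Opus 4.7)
The plan is, for each direction $\om$, to produce a Euclidean ball of radius $r_\Om/2$ sitting deep inside the maximal cap $\Om_\la$, and then to use the convexity of $\Om_\la$ (immediate from the convexity of $\Om$) to fatten the segment joining an arbitrary $x\in\Om_\la\setminus\ovr{G}_\la$ to a point of this ball into a right spherical cone $\cC\subset\Om_\la$ with the required aperture.

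First, I would apply Lemma \ref{lemma ball ABR} to the interior ball of radius $r_\Om$ tangent at the point $P$ of case (S1) or at $Q$ in case (S2). This yields a ball $B_{r_\Om}(p_*)\subset\Om$ whose center satisfies $p_*\cdot\om\ge\la$ and such that $B_{r_\Om}(p_*)\cap\mathcal{H}_\la\subset\Om_\la$. Setting $\xi_0=p_*+(r_\Om/2)\,\om$, one has $\xi_0\cdot\om\ge\la+r_\Om/2$ and $|\xi_0-p_*|=r_\Om/2$, hence
\begin{equation*}
B_{r_\Om/2}(\xi_0)\subset B_{r_\Om}(p_*)\cap\mathcal{H}_\la\subset\Om_\la,
\end{equation*}
and so a ball of radius $r_\Om/2$ lies entirely inside $\Om_\la$.

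Next, for $x\in\Om_\la\setminus\ovr{G}_\la$, since $\Om_\la=\Om\cap\mathcal{H}_\la$ is convex, the convex hull of $\{x\}\cup B_{r_\Om/2}(\xi_0)$ is contained in $\Om_\la$. A short computation with the parametrization $z=x+s\,\eta+r'e$, $r'\le s\tan\theta$, $\eta=(\xi_0-x)/|\xi_0-x|$, $e\perp\eta$, $|e|=1$, shows that the finite right spherical cone with vertex $x$, axis along $\eta$, half-aperture $\theta$, and height $h\le|x-\xi_0|$ lies in this convex hull whenever $\tan\theta\le(r_\Om/2)/|x-\xi_0|$. Since $|x-\xi_0|\le d_\Om$, the choice
\begin{equation*}
\tan\theta=\frac{(1-t)\,r_\Om}{2\,d_\Om}\le\frac{r_\Om}{2\,d_\Om}\le\frac{r_\Om/2}{|x-\xi_0|}
\end{equation*}
is admissible, giving the announced lower bound on $\theta$.

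Finally, I would place $\xi$ on the axis. Along $[x,\xi_0]$, the function $\dist(\cdot,\pa\Om)$ is continuous, is strictly less than $t\,r_\Om$ at $x$, and is at least $r_\Om/2>t\,r_\Om$ at $\xi_0$ (using $t<1/2$). The intermediate value theorem produces $\xi\in[x,\xi_0]$ with $\dist(\xi,\pa\Om)=t\,r_\Om$, so $\xi\in\pa G$; and since both endpoints of the segment satisfy $\cdot\,\om>\la$ strictly, one has $\xi\cdot\om>\la$, so $\xi\in\pa G_\la\setminus\pi_\la$. Taking the cone height $h=|x-\xi|$ places $\xi$ on the axis of $\cC$ and completes the construction. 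The only delicate step is that $\xi_0$ must yield a ball inside $\Om_\la$ rather than merely inside $\Om$: this is precisely what Lemma \ref{lemma ball ABR} secures, once $\xi_0$ is offset from $p_*$ in the direction $\om$ by $r_\Om/2$.
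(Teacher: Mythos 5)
Your proof is correct and follows essentially the same strategy as the paper: apply Lemma \ref{lemma ball ABR} to obtain a half-ball of radius $r_\Om$ in $\Om_\la$, extract a full ball sitting strictly inside the cap, use convexity of $\Om_\la$ to obtain the cone, and bound the aperture from below via $|x-\xi_0|\le d_\Om$. The only cosmetic differences are that you take a ball of radius $r_\Om/2$ inside $\Om_\la$ (the paper instead takes one of radius $(1-t)r_\Om/2$ lying inside $G_\la$, which makes the crossing of $\pa G_\la$ automatic) and that you invoke the intermediate value theorem to place $\xi$ on $\pa G_\la\setminus\pi_\la$, which the paper obtains directly; both yield the stated bound \eqref{theta}.
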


\begin{proof}
Since $\Om$ is of class $C^1$, then the method of moving planes can be applied (see \cite{Fr}). As already observed, $G$ satisfies a uniform interior sphere condition with optimal radius $(1-t)\,r_\Om$; also, 
\begin{equation*}
\Om=\{ x+y: x\in G,\, |y|<t\, r_\Om\},
\end{equation*}
and $G$, $G_\la$ and $\Om_\la$ are all convex, since $\Om$ is convex.

From Lemma \ref{lemma ball ABR} we have that $\Om_\la$ contains a half-ball of radius $r_\Om$ with center $p\in \overline{\Om}_\la$, and $G_\la$ contains the half-ball of radius $(1-t)\,r_\Om$ centered at $p$. The maximal ball $B$ contained in this half ball has radius $\ovr{r}=(1-t)\,r_\Om /2$ and is contained in $G_\la$; we denote its center by $y$.
\par
Let $x$ be any point in $\Om_\la \setminus \ovr{G}_\la$ and let $\mathcal{C}$ be the (finite) right circular cone with vertex at $x$ and based on the $(N-1)$-dimensional ball of radius $\ovr{r}$ obtained by intersecting $B$ with the hyperplane perpendicular to the vector $x-y$ and passing through $y$. Since $\Om_\la$ is convex, then $\mathcal{C} \subset \Om_\la$. Moreover, being $B\subset G_\la$, the axis
of $\mathcal{C}$ intersects $\pa G_\la$ at a point $\zi \notin\pi_\la$.
\par
It is clear that the aperture $2\theta$ of $\mathcal{C}$ is such that
$
\theta \geq  \arctan(\ovr{r}/d_\Om),
$
and hence \eqref{theta} holds.
\end{proof}

\begin{corollary}
Let $\Om$ be a convex domain and $\pa \Om$ be of class $C^1$. 
\par
Then any maximal cap $\Om_\la$  has a Lipschitz continuous boundary  $\pa\Om_\la$ with Lipschitz constant bounded by $2d_\Om / r_\Om$.
\end{corollary}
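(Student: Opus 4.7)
The plan is to deduce the Lipschitz estimate directly from Proposition \ref{proposition cone}, exploiting the convexity of the cap $\Om_\la$ to turn the interior cone condition into a quantitative Lipschitz-graph bound, and then to optimize the auxiliary parameter $t$.

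First, since $\Om$ is convex, the maximal cap $\Om_\la=\Om\cap\mathcal{H}_\la$ is convex as an intersection of two convex sets. Its boundary splits as $(\pa\Om\cap\ovr{\mathcal{H}}_\la)\cup(\ovr\Om\cap\pi_\la)$; the flat piece lying in $\pi_\la$ is locally a hyperplane and is trivially Lipschitz, so only the curved piece $\pa\Om\cap\ovr{\mathcal{H}}_\la$ --- including the corner points on $\pa\Om\cap\pi_\la$ --- requires work.

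Next, fix $t\in(0,1/2)$. Proposition \ref{proposition cone} provides a ball $B\subset G_\la\subset\Om_\la$ of radius $\bar r=(1-t)r_\Om/2$ centered at some $y\in\Om_\la$, together with, at every $x\in\Om_\la\setminus\ovr{G}_\la$, a right spherical cone $\mathcal{C}(x)\subset\Om_\la$ with vertex $x$, axis $y-x$, and half-aperture $\theta\geq\arctan(\bar r/d_\Om)$. I would then approach any $x_0\in\pa\Om\cap\ovr{\mathcal{H}}_\la$ along a sequence from $\Om_\la\setminus\ovr{G}_\la$: the corresponding cones converge (the axis direction $y-x$ varies continuously and $y\neq x_0$) to a cone at $x_0$ of the same aperture, contained in $\ovr{\Om}_\la$. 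Equivalently, one may observe directly that the convex hull of $\{x_0\}\cup B$ is already contained in $\ovr{\Om}_\la$ by convexity.

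The uniform interior cone condition, combined with convexity of $\Om_\la$, implies that $\pa\Om_\la$ is a Lipschitz graph in local charts orthogonal to the cone axis at $x_0$, with Lipschitz constant at most $\cot\theta\leq d_\Om/\bar r=2d_\Om/((1-t)r_\Om)$. Since the Lipschitz regularity of $\pa\Om_\la$ is an intrinsic property that does not depend on the auxiliary parameter $t$, letting $t\to 0^+$ yields the announced bound $2d_\Om/r_\Om$.

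The main obstacle, as I see it, lies in rigorously translating the interior cone condition into a Lipschitz-graph bound with the specific constant $\cot\theta$, particularly at the corner points on $\pa\Om\cap\pi_\la$ where two smooth pieces of $\pa\Om_\la$ meet; convexity of $\Om_\la$ is essential, as it rules out cusps and guarantees that the cone-axis direction (pointing toward the fixed interior ball $B$) varies continuously across the corner, so that a uniform Lipschitz chart can be constructed there.
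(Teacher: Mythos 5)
Your proof is correct and follows essentially the same route the paper implicitly has in mind (the paper states the corollary as an immediate consequence of Proposition~\ref{proposition cone} without a separate proof): convexity of $\Om_\la$ together with a fixed interior ball $B$ gives, at each $x_0\in\pa\Om_\la$, the interior cone $\mathrm{conv}(\{x_0\}\cup B)\subset\ovr\Om_\la$; since the supporting hyperplanes of the convex set $\Om_\la$ at boundary points near $x_0$ cannot cut $B$, the boundary is a Lipschitz graph over the hyperplane orthogonal to $x_0-y$ with constant $\cot\theta$, and the same computation handles the corner set $\pa\Om\cap\pi_\la$. One small simplification worth noting: by passing through the shrunken set $G=\Om(t\,r_\Om)$ you inherit the parameter $t$ and then must appeal to a $t\to 0^+$ limit; this can be avoided entirely by invoking Lemma~\ref{lemma ball ABR} directly, which provides the half-ball $B_{r_\Om}(p)\cap\cH_\la\subset\Om_\la$ and hence a full ball of radius $r_\Om/2$ inside $\Om_\la$, giving $\cot\theta\le d_\Om/(r_\Om/2)=2d_\Om/r_\Om$ with no limit argument.
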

\par
We now state our main result.
\begin{theorem} 
\label{stability serrin}
Let $\Om \subset \RR^N$ be a $(\mathcal{C}, \th)$-domain with boundary of class $C^{2,\al}$
and let $f$ be a locally Lipschitz continuous function with $f(0)\ge 0$.  
\par
There exist two positive constants $\ep$ and $C$, that depend on 
$$
N, \, f, \, d_\Om, \, r_\Om, \, \mbox{the $C^{2,\al}$-regularity of } \Om, \, \max_{\ovr{\Om}} u, \, \min_{\pa\Om} u_\nu,
$$
and a number $\tau\in (0,1)$  such that, if $[u_\nu]_{\pa \Om} \leq \ep$,
then
\begin{equation*}
B_{r_i} \subset \Om \subset B_{r_e},
\end{equation*}
where $B_{r_i}$ and $B_{r_e}$ are two concentric balls and their radii satisfy \eqref{stability-improved}:
\begin{equation*} 
r_e-r_i \leq C\,[u_\nu]_{\pa \Om}^\tau.
\end{equation*}
\end{theorem}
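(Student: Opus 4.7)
The plan is to carry out a quantitative version of Serrin's moving planes method in the spirit of \cite{ABR}, but to replace the ordinary Harnack chain (responsible for the logarithmic loss in \eqref{stability ABR}) by a Carleson-type boundary estimate combined with a refined Harnack inequality along cones. The $(\mathcal{C},\theta)$-assumption is used precisely to ensure that every maximal cap $\Omega_\lambda$ is a Lipschitz domain whose Lipschitz constant is uniformly bounded in the direction of reflection $\omega$, which is exactly the geometric input needed to apply both tools with constants depending only on the parameters listed in the theorem.

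Fix a unit vector $\omega$ and apply the moving planes to produce the critical value $\lambda$, the maximal cap $\Omega_\lambda$, a touching point $P^\lambda$ (case (S1)) or $Q$ (case (S2)), and the nonnegative function $w$ of \eqref{w def}, which solves the linear elliptic equation \eqref{eq w semilinear} with coefficient bounded by the Lipschitz constant of $f$. The first step converts $[u_\nu]_{\partial\Omega}\leq\varepsilon$ into smallness of $w$ at a controlled interior point. Since the tangent hyperplanes of $\partial\Omega$ at $P$ and of $\partial\Omega^\lambda$ at $P^\lambda$ coincide, a direct computation yields $w_\nu(P^\lambda)=u_\nu(P)-u_\nu(P^\lambda)$, whose modulus is bounded by $d_\Omega\,[u_\nu]_{\partial\Omega}$; a parallel observation controls $w$ and its tangential derivatives at $Q$ in case (S2). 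Combined with the uniform $C^{2,\alpha}$ estimates on $u$ (and hence on $w$) and the half-ball $B_\rho(p)\cap\mathcal{H}_\lambda\subset\Omega_\lambda$ furnished by Lemma \ref{lemma ball ABR}, a standard Hopf/Schauder argument in a small ball $B_0$ attached to the touching point yields
\begin{equation*}
w(x_0)\leq C_1\,[u_\nu]_{\partial\Omega}
\end{equation*}
at an interior reference point $x_0\in B_0$ at distance of order $r_\Omega$ from both $\partial\Omega$ and $\pi_\lambda$.

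The core of the proof is the propagation of this smallness to the whole of $G_\lambda$, where $G$ is as in \eqref{G def}, while keeping the dependence on $[u_\nu]_{\partial\Omega}$ of polynomial type. For any $x\in G_\lambda$, the $(\mathcal{C},\theta)$-condition supplies a cone $\mathcal{C}\subset\Omega_\lambda$ of aperture $2\theta$ whose axis reaches $\partial G_\lambda$, and a bounded chain of balls (of number depending only on $d_\Omega/r_\Omega$ and $\theta$) along such axes connects $x$ to $x_0$. Applying a refined Harnack-type inequality adapted to cones of aperture $\theta$ on each ball of the chain yields a power-type loss at every step rather than a multiplicative constant, leading to
\begin{equation*}
w(x)\leq C_2\,[u_\nu]_{\partial\Omega}^{\sigma},\qquad x\in G_\lambda,
\end{equation*}
for some exponent $\sigma\in(0,1)$ depending only on $N$, $\theta$, $\mathfrak{L}_f$ and $d_\Omega/r_\Omega$. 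The corresponding estimate up to $\partial\Omega\cap\partial\Omega_\lambda$ is obtained from the Carleson boundary estimate valid in the Lipschitz cap $\Omega_\lambda$, using that $w\equiv 0$ on that portion of the boundary. Translating this into a geometric statement via the lower bound $u(x)\geq c_0\,\mathrm{dist}(x,\partial\Omega)$, which follows from Hopf's lemma and $\min_{\partial\Omega}u_\nu>0$, shows that every point of $\partial\Omega\cap\mathcal{H}_\lambda$ at distance $\geq t\,r_\Omega$ from $\pi_\lambda$ lies within $O([u_\nu]_{\partial\Omega}^{\sigma'})$ of its mirror image through $\pi_\lambda$.

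To conclude, the approximating set $X(\delta)$ built from $\Omega_\lambda$ and its reflection, as in \cite{ABR} and \cite{CMS2}, sandwiches $\partial\Omega$ in a layer of width $\delta=O([u_\nu]_{\partial\Omega}^\tau)$; since all the previous estimates are uniform in $\omega$, performing this for $N$ mutually orthogonal directions (or via the center-of-mass bookkeeping of \cite{ABR}) forces $\partial\Omega$ into a spherical annulus of the claimed thickness, giving \eqref{stability-improved}. The main obstacle is the quantitative bookkeeping in the cone-Harnack step: one must track explicitly how $\theta$, the number of chain iterations needed to cover $G_\lambda$, and the Lipschitz character of $\partial\Omega_\lambda$ enter the exponent $\sigma$, and then verify that this polynomial decay survives both the Carleson extension up to $\partial\Omega$ and the nonlinear passage from smallness of $w$ to the geometric statement about $\partial\Omega$; it is precisely at this point that the logarithmic estimate of \cite{ABR} is upgraded to a Hölder-type bound.
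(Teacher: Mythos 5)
Your outline has the right ingredients in roughly the right order (moving planes, cone Harnack, Carleson-type boundary estimate, the symmetrized set $X(\delta)$, iteration over $N$ orthogonal directions), but the mechanism by which you claim to obtain the H\"older exponent is wrong, and this is not a detail one can wave away.

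The central error is the claim that a Harnack chain along cones ``yields a power-type loss at every step rather than a multiplicative constant, leading to $w(x)\leq C_2\,[u_\nu]_{\pa\Om}^{\sigma}$ on $G_\la$.'' Harnack's inequality for a linear elliptic equation is itself linear: if $w$ and $\la w$ solve the same equation, any Harnack-type bound must be invariant under this scaling, so a chain of boundedly many balls can only produce $w(x)\le K\,w(x_0)$ with a multiplicative constant $K$. It is impossible to transform a bound $w(x_0)\le C_1\,\veps$ into $w(x)\le C_2\,\veps^\sigma$ with $\sigma<1$ by Harnack alone. What the cone-Harnack estimate (Lemma \ref{lemma harnack cone}) actually gives is a bound $w(x)\le K\,\de^{-\ga}\,w(\zi)$ for $x\in\Si_\de\setminus\ovr G_\la$, i.e.\ a \emph{polynomial blow-up of the multiplicative constant as $\de\to0$} (the chain length grows like $\log(1/\de)$, so $\frak H_a^n\sim\de^{-\ga}$). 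Combined with a $\de$-independent linear bound $\|w\|_{L^\infty(G_\la)}\le C\,[u_\nu]_{\pa\Om}$, this gives $\|w\|_{L^\infty(\Si_\de)}\le C\,\de^{-\ga}[u_\nu]_{\pa\Om}$ (Proposition \ref{prop:new-prop-1}). The H\"older exponent then arises \emph{only} from the optimization that balances $C\,\de^{-\ga}[u_\nu]_{\pa\Om}$ against the linear-growth term $\ovr K\,\de$ coming from \eqref{estimate-distance}, as in \eqref{K si geq}; choosing $\de\sim[u_\nu]_{\pa\Om}^{1/(\ga+1)}$ yields $\tau=1/(\ga+1)$. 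Your proposal never performs this balancing, and without it there is no route from a linear Harnack estimate to a H\"older bound.

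A secondary but genuine mistake: you write that $w\equiv 0$ on $\pa\Om\cap\pa\Om_\la$ and invoke Carleson there. In fact $w(x)=u(x^\la)-u(x)$ vanishes on $\pi_\la\cap\ovr\Om_\la$ (since $x=x^\la$ there), while on $\pa\Om\cap\ovr{\cH_\la}$ one has $u(x)=0$ but $u(x^\la)>0$ generically, so $w>0$ there except at the touching point. In the paper's argument the boundary Harnack / Carleson step is used precisely at the flat portion $\pi_\la$ (where $w$ does vanish) to pass from the interior region $\widetilde\Si$ to $G_\la$, producing the $\de$-independent bound \eqref{w Gm serrin}; it is not, and cannot be, used to obtain a uniform bound on $w$ up to $\pa\Om$ --- the whole point is that the estimate must degrade polynomially near $\pa\Om$.
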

\begin{remark}
(a) As it will be clear from the proof, it holds that $\tau=1/(1+\ga)$ where $\ga$ is given by \eqref{gamma}; $\ga$ depends on the half-aperture $\th$ of the cone $\cC$ in the definition 
of $(\cC,\th)$-domain and on the Harnack's constant for equation \eqref{eq w semilinear}.
\par
(b) It is clear that Theorem \ref{th:stability-convex} is an easy corollary of this theorem, by Proposition \ref{proposition cone}.
\par
(c) As explained in \cite{ABR}, in Theorem \ref{stability serrin} some precautions are in order. 
The dependence of $\ep$ and $C$ on a lower bound for $u_\nu$ is needed in case $f(0)=0$.
In fact, if $f(0)>0$ a comparison argument in an interior ball touching $\pa\Om$ shows that the minimum of $u_\nu$ on $\pa \Om$ can be bounded from below by $f(0)$ times a constant that only depends on $N, \Om, \| u\|_\infty$, and $f$. 
When $f(0)=0$, instead, the first inequality in \eqref{estimate-distance} below does not hold and the constant $\ep$ (and hence $C$) must depend on a lower bound for $u_\nu$. This fact can be seen by considering any (positive) multiple of the first Dirichlet eigenfunction $\phi_1$ for $-\De$: in fact, for any $n\in\NN$ the function $\phi_1/n$ satisfies \eqref{serrin1} with $f(u)=\la_1\,u$,
being $\la_1$ the first Dirichlet eigenvalue;
although $(\phi_1/n)_\nu\to 0$ on $\pa\Om$ as $n\to\infty$, one cannot expect to derive any
information on the shape of $\Om$.
\par
The question whether  Theorem \ref{stability serrin} or \cite[Theorem 1]{ABR} still hold if $f(0)<0$
remains open (see \cite[Section 5.1]{ABR}).
\end{remark}
\par
The proof of Theorem \ref{stability serrin} will be carried out in Sections \ref{sec:enhanced harnack} and 
\ref{sec:stability}.  Since it is quite long and complex, for the reader's convenience, we give an outline which consists of three main steps.
\begin{itemize}
\item[(I)]
We first improve \cite[Proposition 1]{ABR}. We fix a direction $\om$ and apply the procedure of moving planes. For a sufficiently small $\de>0$, we consider the set $\Om(\de)$ in \eqref{Om de}
and show that there exists a connected component $\Sigma_\de$ of $\Om(\de) \cap \mathcal{H}_\lam$ such that
$$
\|w\|_{L^\infty(\Sigma_\de)} \leq \frac{C}{\de^\ga} \pusn\, ,
$$ 
where $C$ and $\ga$ do not depend on $\de$ and $\om$ (see Proposition \ref{prop:new-prop-1} below).

\item[(II)]
Let $X(\de)$ be the union of $\Sigma_\de$ and its reflected image in the critical hyperplane $\pi_\lam$. Since $u$ grows linearly near $\pa \Om$, the smallness of $w$ in $\Sigma_\de$ implies that $X(\de)$ fits $\Om$ well (see Lemma \ref{lemma 34 ABR} and Proposition \ref{prop 5 ABR} below). This step gives the approximate symmetry of $\Om$ in the direction $\om$.

\item[(III)]
Since the arguments in (I) and (II) do not depend on the chosen direction, we apply them for $N$ (mutually) orthogonal directions and obtain a point $\mathcal{O}$ as the intersection  of the corresponding $N$ critical hyperplanes. The point $\mathcal{O}$ can be chosen as an approximate center of symmetry. In fact, we are able to define two balls centered at $\mathcal{O}$ such that Theorem \ref{stability serrin} holds (see the proof of  Theorem \ref{stability serrin} in Section \ref{sec:stability} below). 
\end{itemize}

We notice that once one proves the first step, (I), the remaining ones, (II) and (III), follow from a well-established argument as in \cite{ABR} and \cite{CMS2}. 

\setcounter{equation}{0}

\section{Harnack's inequality in a cone}
\label{sec:enhanced harnack}

In this section, we prove some results based on Harnack's type inequality in a cone, which will be used in Section \ref{sec:stability} to prove step (I) (in particular Proposition \ref{prop:new-prop-1}). 

Let $0<a<1$ be fixed. It is well-known (see \cite[Theorem 8.20]{GT}) that a solution $w$ of \eqref{eq w semilinear} satisfies the following {\it Harnack's inequality}
\begin{equation} \label{harnack}
\sup_{B_{ar}} w \leq \frak{H}_a \inf_{B_{ar}} w ,
\end{equation}
for any ball $B_r \subset \Om_\la$; the Harnack constant $\frak{H}_a$ can be bounded by the power 
$\sqrt{N} + \sqrt{r\,\| c\|_\infty}$ of a constant
only depending on $N$ and $a$ (see \cite{GT}). For instance, if $c(x) \equiv 0$, by the explicit Poisson's representation formula for harmonic functions, we have that
\begin{equation}\label{Harnack Harmonic}
\sup_{B_{ar}} w \leq \left( \frac{1+a}{1-a} \right)^N \inf_{B_{ar}} w ,
\end{equation}
for any $B_r \subset \Om_\la$  (see \cite{GT}).

The following Lemma consists of an application of Harnack's inequality to a Harnack's chain of  balls 
contained in a cone. The result is well-known 
 (see \cite{Mo}, \cite{JK}, \cite{Ke}); however, since we are interested in a quantitative version of it, we provide our proof.

\begin{lm} 
\label{lemma harnack cone}
Fix a number $a\in (0,1)$. Pick $z \in \pa \Om \cap \ovr{\cH}_\la$ and let $\cC$ be any right spherical cone contained in $\Om_\la$  and with vertex at $z$; let $2\theta$ be the aperture of $\cC$.
\par
Let $w$ be given by \eqref{w def} and pick any two points $x$ and $\xi$ on the axis of $\cC$ such that
$$
|x-z|<|\xi-z|.
$$
\par
Then we have that
\begin{equation}\label{Harnack x e y}
\frac{|x-z|^\ga}{K}\, w (x) \leq w(\xi)\, \leq \frac{K}{|x-z|^\ga}\,w (x),
\end{equation}
where
\begin{eqnarray}
&& K=\frak{H}_a \,\left[\frac{|\xi-z|(1-a\sin\theta)}{1-a}\right]^\ga,\label{K} \\
&& \gamma = \log_\be \frak{H}_a,  \ \ \be= \frac{1+a\sin\theta}{1-a\sin\theta},\label{gamma}
\end{eqnarray}
and $\frak{H}_a$ is given by \eqref{harnack}.
\end{lm}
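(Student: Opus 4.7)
The approach I would take is to build a Harnack chain along the axis of $\cC$: a sequence of balls, each inscribed in the cone, overlapping in such a way that one application of \eqref{harnack} in each ball compares the values of $w$ at successive chain points. Iterating then yields a comparison between $w(x)$ and $w(\xi)$, and the fact that the chain advances geometrically in the distance from the vertex $z$ converts the exponentially many Harnack applications into a power law in $|\xi-z|/|x-z|$.

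To make this precise, I would parametrize the axis of $\cC$ by the distance from $z$ and place centers $y_k$ on it according to the following recipe: set $\rho_k = |y_k - z|$ and $r_k = \rho_k \sin\th$, the radius of the largest ball centered at $y_k$ and contained in the infinite cone supporting $\cC$. Then $B_{r_k}(y_k) \subset \cC \subset \Om_\la$, so \eqref{harnack} applies in $B_{ar_k}(y_k)$. The segment of the axis inside this Harnack ball runs from $\rho_k(1-a\sin\th)$ to $\rho_k(1+a\sin\th)$. Choosing the chain so that the outer edge of each Harnack ball coincides with the inner edge of the next forces $\rho_{k+1}(1-a\sin\th) = \rho_k(1+a\sin\th)$, i.e.\ $\rho_{k+1}/\rho_k = \be$. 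I would then take $x_k$ to be the inner-edge point at distance $\rho_k(1-a\sin\th)$ from $z$; with this choice both $x_k$ and $x_{k+1}$ belong to (the closure of) the Harnack ball $B_{ar_k}(y_k)$.

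With this setup, \eqref{harnack} yields $\frak{H}_a^{-1}\,w(x_k) \le w(x_{k+1}) \le \frak{H}_a\,w(x_k)$ at every step, so after $n$ iterations
\[
\frak{H}_a^{-n}\, w(x) \le w(x_n) \le \frak{H}_a^n\, w(x).
\]
Since the distances satisfy $|x_n - z| = |x - z|\,\be^n$, the exponent $n$ equals $\log_\be(|x_n - z|/|x - z|)$, so $\frak{H}_a^n = (|x_n-z|/|x-z|)^\ga$ with $\ga = \log_\be \frak{H}_a$: the geometric progression in $n$ turns into the desired power law in the distances. I would pick $n$ as the smallest integer with $|x_n - z| \ge |\xi - z|$; one further application of \eqref{harnack} in the Harnack ball that contains both $x_{n-1}$ and $\xi$ transfers the estimate from $x_{n-1}$ to $\xi$, contributing an extra factor $\frak{H}_a$. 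Using $\be^\ga = \frak{H}_a$ and collecting constants then produces both inequalities in \eqref{Harnack x e y} with $K$ as in \eqref{K}.

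The main technical point to watch will be verifying that each inscribed ball $B_{r_k}(y_k)$ is genuinely contained in the \emph{finite} cone $\cC$ (not merely in the infinite cone of the same aperture), so that \eqref{harnack} can be legitimately invoked, and that the chain reaches $\xi$ before running off the base of $\cC$. The bookkeeping at the first and last segments of the chain, where $x$ and $\xi$ do not in general sit exactly on the inner or outer edges of a Harnack ball, is what should produce the geometric prefactor $(1-a\sin\th)/(1-a)$ appearing in \eqref{K}. A minor secondary issue is that the chain points $x_k, x_{k+1}$ lie on the boundary of the Harnack ball $B_{ar_k}(y_k)$; this is easily handled either by enlarging $a$ infinitesimally and absorbing the change into $\frak{H}_a$, or by extending \eqref{harnack} to closed balls by continuity.
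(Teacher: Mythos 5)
Your proposal is correct and follows essentially the same route as the paper's proof: a Harnack chain of balls inscribed in the cone along its axis, with consecutive balls related by the geometric ratio $\be=(1+a\sin\th)/(1-a\sin\th)$, so that the number of Harnack applications is $\approx\log_\be(|\xi-z|/|x-z|)$, which converts $\frak{H}_a^n$ into the power law with exponent $\ga=\log_\be\frak{H}_a$. The only differences are bookkeeping ones (you track ``inner-edge'' points while the paper tracks the centers $p_i$ with $p_0=x$), which affect the precise constant $K$ but not the structure or validity of the argument.
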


\begin{proof}
Here we prove the second inequality in \eqref{Harnack x e y}; the first one can be proved similarly.
\par
Let $\ell$ be the unit vector defining the axis of $\cC$ through $z$, that is for instance
\begin{equation*}
    \ell=\frac{x-z}{|x-z|}.
\end{equation*}
We now construct a chain of balls $B_{r_i}(p_i)$, $i=0, 1, \dots, n$, joining $x$ to $y$ with the following
specifications:
\begin{itemize}
\item[(i)] the centers $p_0, p_1,\dots, p_n$ belong to the axis of $\cC$;
\item[(ii)] $p_0=x$, $r_0=|x-z|$ and $\xi\in B_{r_n}(p_n)$;
\item[(iii)] the balls $B_{r_1}(p_1), \dots, B_{r_n}(p_n)$ are all contained in $\cC$ and tangent
to the lateral surface of $\cC$;
\item[(iv)] the radii $r_1,\dots, r_n$ are chosen such that the balls $B_{ar_0}(p_0), \dots, B_{ar_n}(p_n)$ are pairwise disjoint and
\begin{equation*}
\ovr{B_{ar_i}(p_i)} \cap \ovr{B_{ar_{i+1}}(p_{i+1})} = \{p_i + a r_i \ell \},\ i=0,\ldots,n-1.
\end{equation*}
\end{itemize}
\par
A calculation shows that (i)-(iv) determine the $r_i$'s and $p_i$'s as
\begin{equation}
\label{ri&pi}
r_i=r_0\,\frac{(1-a) \sin\th}{1-a\sin\th}\,\be^i, \quad p_i=z+r_0\,\frac{1-a}{1-a\sin\th}\,\be^i\,\ell , \ \ i=1, \dots, n.
\end{equation}
\par
Since $|p_{n-1}-z|\le |\xi-z|$,  from the second formula in \eqref{ri&pi} we obtain a bound for $n$:
\begin{equation}
\label{bound-n}
n\le 1+\frac{\log_{\frak{H}_a}\left(\frac{|\xi-z|}{r_0}\frac{1-a\sin\th}{1-a}\right)}{\log_{\frak{H}_a}\be}
\end{equation}
\par
As usual, the application of Harnack's inequality \eqref{harnack} to each ball $B_{r_i}(p_i)$ of the chain gives that
$
w(\xi)\le\frak{H}_a^n\,w(x);
$
\eqref{Harnack x e y} then follows from \eqref{bound-n} and some algebraic manipulations.
\end{proof}

\begin{remark}
We notice that when $c(x) \equiv 0$ (and hence $\Delta w=0$) we have that
\begin{equation}
\label{gamma-torsion}
\gamma= N \frac{\log \frac{1+a}{1-a}}{\log\frac{1+a \sin\th}{1-a \sin\th}}.
\end{equation}
We observe that $\ga \geq N$ and equality holds only if $\th=\pi/2$.
\end{remark}

\setcounter{equation}{0}

\section{Stability for Serrin's problem: the proof}
\label{sec:stability}

In this section, thanks to the preparatory lemmas of Section  \ref{sec:enhanced harnack} and by following the outline announced in Section \ref{sec:MovPlanes}, we shall bring the proof of Theorem \ref{stability serrin} to an end.

\par
As already mentioned, the crucial step in the proof is item (I) in the outline; it is the analog of \cite[Proposition 1]{ABR}. We modify and improve the procedure used in \cite{ABR} at two salient points: (i) the definition of the approximating symmetric set $X(\de)$; (ii)
the bound on the smallness of the function $w$ in \eqref{w def} in terms of the parameter $\de$ and the semi-norm $\pusn$. 
We draw the reader's attention on the fact that, while (i) is due to a refinement,
based on Carleson-type estimates, of the technique used in \cite{ABR}, for (ii) the assumption that $\Om$ is $(\mathcal{C}, \th)$-domain is necessary --- and, so far, we are not able to remove it --- to treat the case of a general bounded domain with $C^{2,\al}$-smooth boundary.
More precisely, to every maximal cap that comes about in the method of moving planes, we want to apply Lemma \ref{lemma harnack cone} that, roughly speaking, requires that such maximal cap is Lipschitz continuous, which is essentially our definition of $(\mathcal{C}, \th)$-domain, that is certainly fulfilled if $\Om$ is convex, as shown in Proposition \ref{proposition cone}.  In other words, we are so far unable to exclude (even generically) that the situation depicted in Figure 1 (b) may occur.
\par
We now start the procedure.
For a fixed direction $\om$, we let $\la$ be the number defined in \eqref{lambda def}; we then consider the connected component $\Sigma$ of $\Om_\la$ which intersects the interior touching ball at the point $P$, if case (S1) occurs, or at the point $Q$, if (S2) occurs.
\par
For $\de >0$, we consider the set $\Om(\de)$ defined in \eqref{Om de} and
define $\Sigma_\de$ as the connected component of
$
\Om(\de) \cap \mathcal{H_\la}
$
contained in $\Sigma$ --- notice that our definition of $\Sigma_\de$ differs from that in \cite{ABR}.
In addition, we fix the domain $G$ in \eqref{G def} by choosing $t=1/32$ and we set $a=1/2$ in Lemma \ref{lemma harnack cone}.

\begin{prop}
\label{prop:new-prop-1}
Let $\Om$ be a $(\mathcal{C}, \th)$-domain. 
\par
Then, there is a constant $C=C(N, \Om, d_\Om, r_\Om, f, \max_{\ovr{\Om}} u)$ such that
\begin{equation}
\label{w Linfty Sigma}
\|w\|_{L^\infty(\Sigma_\de)} \leq C\, \de^{-\ga} [u_\nu]_{\pa \Om}
\ \mbox{ for } \ 0<\de \le r_\Om/32, 
\end{equation}
where
$\ga$ and $\beta$ are the numbers defined in \eqref{gamma}.
\end{prop}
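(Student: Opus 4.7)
The plan is to split $\Sigma_\de$ into the well-interior piece $\Sigma_\de \cap G_\la$ and the near-boundary piece $\Sigma_\de \setminus G_\la$, bound $w$ uniformly in $[u_\nu]_{\pa\Om}$ on the former by a classical Harnack chain, and then pay the price $\de^{-\ga}$ on the latter by propagating along the spherical cones supplied by the $(\cC,\th)$-assumption, via Lemma~\ref{lemma harnack cone}.

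First I would produce a reference estimate: there is a point $\xi_0 \in G_\la \cap \Sigma$ at distance of order $r_\Om$ from $\pa\Om$ with
\[
w(\xi_0) \leq C\,[u_\nu]_{\pa\Om}.
\]
This is essentially a refinement of \cite[Proposition~1]{ABR}. The input is the tangency at $P^\la$ (case S1) or $Q$ (case S2), which forces $w$ to vanish at a point $P$ of $\pa\Om_\la$; unwinding the reflection and exploiting the tangency-of-normals identity $R\,n(P^\la)=n(P)$ at the contact point, one computes
\[
|\na w(P)| = |u_\nu(P^\la) - u_\nu(P)| \leq [u_\nu]_{\pa\Om}\,|P^\la - P| \leq [u_\nu]_{\pa\Om}\,d_\Om.
\]
A barrier/boundary-comparison argument, as in \cite{ABR}, then transfers this boundary smallness into the pointwise bound at $\xi_0$; the only departure from \cite[Proposition~1]{ABR} is that the right-hand side features the Lipschitz semi-norm $[u_\nu]_{\pa\Om}$ rather than the $C^1$-deviation of $u_\nu$, because the only regularity of $u_\nu$ actually used in the derivation is Lipschitz continuity. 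This is the main obstacle I anticipate.

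Next I would propagate the bound uniformly on $G_\la \cap \Sigma$: since $G$ satisfies a uniform interior sphere condition of radius $(1-t)r_\Om$, the set $G_\la$ is a connected Lipschitz subdomain whose geometry is controlled only by $d_\Om$ and $r_\Om$, so any two points of the component containing $\xi_0$ can be joined by a Harnack chain of balls of radius comparable to $r_\Om$ and of uniformly bounded length. Iterating \eqref{harnack} along such a chain yields
\[
w(\xi) \leq C_1\,[u_\nu]_{\pa\Om} \qquad \text{for every } \xi \in \ovr{G_\la \cap \Sigma},
\]
where $C_1$ depends only on the listed parameters.

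Finally, for $y \in \Sigma_\de \setminus G_\la$ one has $\de \leq \dist(y,\pa\Om) < r_\Om/32$. The $(\cC,\th)$-property provides a right spherical cone of aperture $2\th$ with vertex at $y$ and axis through some $\xi_y \in \pa G_\la \setminus \pi_\la$, contained in $\Om_\la$. Extending the axis backward from $y$ until it first meets $\pa\Om$ at a point $z_y \in \pa\Om \cap \ovr{\cH}_\la$, and slightly shrinking the aperture (by a factor depending only on $\th$, $r_\Om$, $d_\Om$) so that the resulting $\pa\Om$-rooted cone still sits inside $\Om_\la$, I would invoke Lemma~\ref{lemma harnack cone} with vertex $z = z_y$, inner point $x = y$, and outer point $\xi = \xi_y$, obtaining
\[
w(y) \leq \frac{K}{|y - z_y|^\ga}\,w(\xi_y) \leq \frac{C_1\,K}{\de^\ga}\,[u_\nu]_{\pa\Om},
\]
since $|y - z_y| \geq \dist(y,\pa\Om) \geq \de$ and $w(\xi_y) \leq C_1\,[u_\nu]_{\pa\Om}$ by the previous step. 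Combining with the bound on $\Sigma_\de \cap G_\la$ yields \eqref{w Linfty Sigma}. A secondary technical point here is to ensure that the cone-aperture reduction keeps the exponent $\ga$ of Lemma~\ref{lemma harnack cone} independent of $\de$ and $\om$, which follows from the uniformity built into the definition of $(\cC,\th)$-domain.
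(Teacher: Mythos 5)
Your overall strategy — obtain a uniform-in-$\de$ bound on $w$ on the interior set $G_\la$, then pay the factor $\de^{-\ga}$ by running a Harnack chain inside the cone supplied by the $(\cC,\th)$-hypothesis — is exactly the paper's. The treatment of the near-boundary region via Lemma~\ref{lemma harnack cone}, and the observation that \cite[Proposition~1]{ABR} transfers to the seminorm $[u_\nu]_{\pa\Om}$, are both in line with what the paper does.

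However, there is a genuine gap in your second step. You claim that any two points of $G_\la \cap \Sigma$ can be joined by a Harnack chain of balls of radius comparable to $r_\Om$, all contained in $\Om_\la$. This is false near the critical hyperplane $\pi_\la$. The set $G_\la = G \cap \cH_\la$ contains points arbitrarily close to $\pi_\la$, and from such a point the largest ball one can fit inside $\Om_\la$ has radius at most its distance to $\pi_\la$, which can be arbitrarily small (recall $w$ is only defined in $\Om_\la$ and vanishes on $\pi_\la$, so you cannot reflect $w$ across $\pi_\la$ and retain positivity). An interior Harnack chain reaching such a point would therefore need balls of vanishing radius and unbounded length, producing a constant that blows up as $\dist(\cdot,\pi_\la)\to 0$ — not the uniform bound \eqref{w Gm serrin} that the rest of your argument requires, since the point $\xi_y\in\pa G_\la\setminus\pi_\la$ provided by the $(\cC,\th)$-definition may itself lie close to $\pi_\la$. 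The paper circumvents this precisely by invoking a \emph{boundary} Harnack inequality (Theorem~1.3 of \cite{BCN}, applied as in \cite[Lemmas~3.2 and 4.2]{CMS2}), which controls the ratio of positive solutions near a Lipschitz portion of the boundary on which both vanish; an interior Harnack chain alone cannot replace this. Your third step is essentially the paper's, and the aperture-reduction you introduce to place the cone's vertex on $\pa\Om$ is a reasonable reading of a point the paper itself leaves implicit.
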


\begin{proof}
Let $\widetilde{\Si}=\{x\in \Sigma:\ \dist(x,\pa \Sigma)> r_\Om/64\}$. We apply  \cite[Proposition 1]{ABR} with $\de=r_\Om/64$ and obtain that 
\begin{equation*}
\|w\|_{L^\infty(\widetilde{\Si}_\de)} \leq \widetilde{C}\,
[u_\nu]_{\pa \Om},
\end{equation*}
for some constant $\widetilde{C}=\widetilde{C}(N, \Om, d_\Om, r_\Om, f, \| u\|_\infty)$ . 
By using an argument analogous to the ones in the proofs of \cite[Lemmas 3.2 and 4.2]{CMS2} -- which are based on boundary Harnack's inequality -- we can extend the previous estimate to $G_\la$:
\begin{equation} \label{w Gm serrin}
\|w\|_{L^\infty(G_\la)} \leq M C\, [u_\nu]_{\pa \Om},
\end{equation}
where $M$ is the constant appearing in the boundary Harnack inequality (see Theorem 1.3 in \cite{BCN}). It is important to notice that the bound in \eqref{w Gm serrin} does not depend on $\de$.

Let $x$ be any point in $\Sigma_\de \setminus \overline{G}_\la$, with $\de < r_\Om/32$. Since $\Om$ is a $(\mathcal{C}, \th)$-domain, from Lemma \ref{lemma harnack cone}, by choosing $\xi$ equal to the point $\zi\in \pa G_\la\setminus\pi_\la$ (in the definition of $(\mathcal{C}, \th)$-domain)  
in Lemma \eqref{lemma harnack cone}, we obtain that
\begin{equation*}
    w(x) \leq K\, \de^{-\ga} w(\zi);
\end{equation*}
\eqref{w Gm serrin} then yields \eqref{w Linfty Sigma} with $C=\widetilde{C} KM$.
\end{proof}

\begin{remark}
The statement of Proposition \ref{prop:new-prop-1} differs from that of \cite[Proposition 1]{ABR}
in three aspects: (i) the set $\Si_\de$ we consider extends up to the hyperplane $\cH_\la$; (ii) the dependence on $\de$ in \eqref{w Linfty Sigma} greatly improves that of \cite[Eq. (8)]{ABR}, that blows up exponentially as $\de\to 0$; (iii) the seminorm $[u_\nu]_{\pa\Om}$ replaces the norm
$\| u_\nu-d\|_{C^1(\pa\Om)}$. 
\end{remark}

Now, we define a symmetric open set as
\begin{equation}
\label{X def}
X(\de)=\mbox{the interior of} \ \,\Si_\de \cup \Si_\de^{\la} \cup (\pa \Si_\de \cap \pi_\la);
\end{equation}
we want to show that $X(\de)$ \emph{fits $\Om$ well}; how well is the main point of this paper.
\par
The main idea is to combine the estimate \eqref{w Linfty Sigma} on the smallness of $w$, together with the fact the $u$ grows linearly near $\pa \Om$; as shown in  \cite[Proposition 4]{ABR} indeed, we know that
\begin{equation}
\label{estimate-distance}
\underline{K}\,\dist(x,\pa\Om)\le u(x)\le \overline{K}\,\dist(x,\pa\Om) \ \mbox{ for all} \ x\in\Om,
\end{equation}
where 
$$
1/\underline{K}, \ovr{K}\le C=C(\Om,d_\Om, \max_{\ovr{\Om}}u,f,\min_{\pa\Om} u_\nu).
$$ 
In the following lemma we give our version  \cite[Eq.(34)]{ABR}.

\begin{lm} \label{lemma 34 ABR}
For $0<\si, \de \leq r_\Om/16$ let $\Om(\si)$ and $X(\de)$ be the sets defined in \eqref{Om de}
and \eqref{X def}, respectively.  Let $\ga$ and $C$ be given by \eqref{gamma} and \eqref{w Linfty Sigma}, respectively. 
\par
If 
\begin{equation} \label{K si geq}
\underline{K}\,\si > C \de^{-\ga} [u_\nu]_{\pa \Om} + \ovr{K}\,\de,
\end{equation}
then we have that
\begin{equation}
\label{Om sig subset X de}
\Om(\sigma) \subset X(\de) \subset \Om.
\end{equation}
\end{lm}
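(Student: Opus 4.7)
The proof reduces to the two inclusions in \eqref{Om sig subset X de}. The outer inclusion $X(\de)\subset\Om$ is essentially by construction: $\Sigma_\de\subset\Om_\la\subset\Om$ and, by the moving-planes containment $\Om^\la\subset\Om$, $\Sigma_\de^\la\subset\Om^\la\subset\Om$, so the open interior of their union remains in $\Om$.

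For $\Om(\si)\subset X(\de)$, observe first that since $\underline{K}\le\overline{K}$, the hypothesis \eqref{K si geq} forces $\si>\de$, hence $\Om(\si)\subset\Om(\de)$. Pick $x\in\Om(\si)$. If $x\cdot\om\ge\la$, I would combine connectedness of $\Om(\si)$ with Lemma \ref{lemma ball ABR} (which anchors the distinguished component $\Sigma$ through a half of the interior touching ball) to place $x$ in the connected component of $\Om(\de)\cap\cH_\la$ defining $\Sigma_\de$, so $x\in\overline{\Sigma_\de}\subset X(\de)$.

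If $x\cdot\om<\la$, the target is $x^\la\in\Sigma_\de$ (whence $x\in\Sigma_\de^\la\subset X(\de)$). From \eqref{estimate-distance} one has $u(x)\ge\underline{K}\,\si$. Assume towards a contradiction that $x^\la\notin\Om(\de)$: either $x^\la\notin\Om$ (set $u(x^\la):=0$) or $\dist(x^\la,\pa\Om)\le\de$, so $u(x^\la)\le\overline{K}\,\de$ by \eqref{estimate-distance}. Using $w(x^\la)=u(x)-u(x^\la)$ and \eqref{K si geq},
\[
w(x^\la)\;\ge\;\underline{K}\,\si-\overline{K}\,\de\;>\;C\,\de^{-\ga}\,[u_\nu]_{\pa\Om},
\]
apparently contradicting the bound $\|w\|_{L^\infty(\Sigma_\de)}\le C\,\de^{-\ga}\,[u_\nu]_{\pa\Om}$ of Proposition \ref{prop:new-prop-1}; the catch is that $x^\la$ itself may not sit in $\Sigma_\de$. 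I would bridge this via a nearby point $x_0\in\pa\Sigma_\de$, chosen on the interior normal to $\pa\Om$ through the projection of $x^\la$ (or of the segment's exit point if $x^\la\notin\Om$), at distance exactly $\de$ from $\pa\Om$. Then $|x^\la-x_0|\le\de$, $w(x_0)\le C\,\de^{-\ga}\,[u_\nu]_{\pa\Om}$ by continuity, and Lipschitz continuity of $u$ yields $|w(x^\la)-w(x_0)|\le 2\|\na u\|_\infty\,\de$, so
\[
\underline{K}\,\si-\overline{K}\,\de\;\le\;C\,\de^{-\ga}\,[u_\nu]_{\pa\Om}+2\|\na u\|_\infty\,\de,
\]
contradicting \eqref{K si geq} after absorbing the $O(\de)$ error into the $\overline{K}\,\de$ term (legitimate since $\|\na u\|_\infty$ is bounded by the fixed data). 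Hence $x^\la\in\Om(\de)$, and the component analysis above places $x^\la\in\Sigma_\de$.

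The main obstacle is this last contradiction step: Proposition \ref{prop:new-prop-1} controls $w$ only inside $\Sigma_\de$, while the natural test point $x^\la$ may lie in the thin layer $\Om_\la\setminus\Sigma_\de$ of width $\de$ hugging $\pa\Om$. Bridging through $x_0\in\pa\Sigma_\de$ and Lipschitz continuity of $u$ yields an $O(\de)$ error, harmlessly tolerated by the $\overline{K}\,\de$ term in \eqref{K si geq}; no further Harnack-type improvement beyond Section \ref{sec:enhanced harnack} is required here.
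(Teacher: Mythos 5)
Your outer inclusion $X(\de)\subset\Om$ is fine, but the inner inclusion has a genuine gap, and the way you try to repair it does not work with the hypothesis as stated. The paper proceeds by contradiction via a \emph{path-following} argument: pick $y\in\Om(\si)\setminus X(\de)$, join it to some $x\in X(\de)\cap\Om(\si)$ by a path inside the connected set $\Om(\si)$, and let $z$ be the \emph{first} point of the path leaving $X(\de)$, so $z\in\pa X(\de)\cap\Om(\si)$. By the construction of $X(\de)$, either $z\in\pa\Si_\de$ with $\dist(z,\pa\Om)=\de$, or $z^\la\in\pa\Si_\de$ with $\dist(z^\la,\pa\Om)=\de$; in both cases one lands \emph{exactly} on $\pa\Si_\de$, where the bound $\|w\|_{L^\infty(\Si_\de)}\le C\de^{-\ga}[u_\nu]_{\pa\Om}$ of Proposition \ref{prop:new-prop-1} applies by continuity of $w$ on $\ovr{\Si_\de}$, while \eqref{estimate-distance} gives $u(z^\la)\le\ovr K\de$. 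No bridging and no $O(\de)$ slack are needed, and $z\in\Om(\si)$ gives $u(z)\ge\underline K\si$, contradicting \eqref{K si geq}.

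Your per-point approach runs into two specific problems. First, the contradiction you want to derive when $x^\la\notin\Om(\de)$ needs the bound on $w(x^\la)$, but Proposition \ref{prop:new-prop-1} only bounds $w$ in $\Si_\de$, so the argument is circular until you locate a usable test point; your bridge point $x_0$ at distance $\de$ from $\pa\Om$ is not shown to lie in $\pa\Si_\de$ (it could fall into a different component of $\Om(\de)\cap\cH_\la$, or not in $\ovr{\cH}_\la$ at all). Second, even granting $x_0\in\pa\Si_\de$, the resulting inequality $\underline K\si-\ovr K\de\le C\de^{-\ga}[u_\nu]_{\pa\Om}+2\|\na u\|_\infty\de$ does \emph{not} contradict the hypothesis $\underline K\si>\,C\de^{-\ga}[u_\nu]_{\pa\Om}+\ovr K\de$: there is no margin to ``absorb'' the extra $2\|\na u\|_\infty\de$ into $\ovr K\de$, since $\underline K$, $\ovr K$ and the hypothesis are fixed. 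One would have to strengthen \eqref{K si geq} (and correspondingly adjust Proposition \ref{prop 5 ABR}) to make that work, which is a different lemma. Similarly, the $x\cdot\om\ge\la$ case is left as a sketch (``combine connectedness \dots with Lemma \ref{lemma ball ABR}''), and your claimed conclusion $x\in\ovr{\Si_\de}\subset X(\de)$ is not valid as written since $X(\de)$ is open. The path/exit-point device resolves all of these bookkeeping issues at once, which is why the paper uses it.
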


\begin{proof}
We have that $X(\de) \subset \Om$ by construction.
\par
To show the first inclusion in \eqref{Om sig subset X de} we proceed by contradiction. Since the maximal cap $\Om_\la$ contains a ball of radius $r_\Om/4$, then $X(\de)$ intersects $\Om(\sigma)$. Assume that there exists a point $y \in \Om(\sigma) \setminus X(\de)$ and let $x$ be any point in $X(\de) \cap \Om(\si)$. Since $\Om(\si)$ is connected, $x$ is joined to $y$ by a path contained in $\Om(\si)$. Let $z$ be the first point on this path which falls outside $X(\de)$. It is clear that $z \in \pa X(\de) \cap \Om(\si)$. We now consider two cases.
\par
If $z\cdot \omega < \lam$ then the reflection $z^\lam$ of $z$ in $\pi_\lam$ is such that $z^\lam \in \pa \Sigma_\de$ and $\dist(z^\lam,\pa \Om) =\de$. Since $u(z)=w(z^\lam)+u(z^\lam)$, from \eqref{w Linfty Sigma} and  \eqref{estimate-distance} we have that
\begin{equation} \label{34 by contrad}
\underline{K}\,\si\le u(z) \leq C \de^{-\ga} [u_\nu]_{\pa \Om} + \ovr{K}\,\de,
\end{equation}
a contradiction.
\par
If $z\cdot \omega \geq \lam$, then $z \in \pa \Sigma_\de$ and $\dist(z,\pa \Om) =\de$. Hence, from \eqref{estimate-distance} we obtain that $u(z) \leq \ovr{K}\,\de$ and \eqref{34 by contrad} holds as well.
Since $z \in \Om(\si)$ then from \eqref{estimate-distance} we have that $u(z) \geq \underline{K}\, \si$ which contradicts \eqref{K si geq} on account of \eqref{34 by contrad}.
\end{proof}

We draw the reader's attention on the differences in \eqref{K si geq} compared to \cite[Eq. (34)]{ABR}:
(i) thanks to Lemma \ref{lemma harnack cone}, the term $\de^{-\ga}$ replaces one in \cite[Eq. (34)]{ABR} that blows up exponentially; (ii) due to the different definition of the symmetrized set $X(\de)$ (denoted by $X_\de$ in \cite{ABR}), we
simplify the last summand in \eqref{K si geq}.

\begin{prop} 
\label{prop 5 ABR}
Let $\ga$ be given by \eqref{gamma}. 
There exist positive numbers $C$ and $\ep$, depending on $\Om,d_\Om, r_\Om, \max_{\ovr{\Om}}u,f,\min_{\pa\Om} u_\nu$, and $\sigma,\, \de>0$ such that  \eqref{Om sig subset X de} holds with
\begin{equation} 
\label{de leq si}
\de < \si < C\, [u_\nu]_{\pa \Om}^\frac{1}{\ga+1},
\end{equation}
provided that $[u_\nu]_{\pa \Om}\le\ep$.
\end{prop}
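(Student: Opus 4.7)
The statement is essentially a quantitative optimization of the inclusion provided by Lemma \ref{lemma 34 ABR}. Write $\eta = [u_\nu]_{\pa \Om}$ and let $C_0$ denote the constant $C$ from \eqref{w Linfty Sigma} (and hence from \eqref{K si geq}); all other constants $\underline{K}, \ovr{K}$ only depend on the parameters listed in the statement. The conclusion \eqref{Om sig subset X de} will follow as soon as we exhibit $\si, \de \in (0, r_\Om/16]$ with $\de < \si$ and
\begin{equation*}
\underline{K}\,\si \;>\; C_0\, \de^{-\ga}\,\eta + \ovr{K}\,\de.
\end{equation*}
So the task reduces to a one-parameter minimization of the right-hand side in $\de$.

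\textbf{Balancing the two terms.} The two summands $C_0 \de^{-\ga}\eta$ and $\ovr{K}\de$ exhibit opposite monotonicities in $\de$, so the natural choice is to equate their orders of magnitude. Setting $\de = A\,\eta^{1/(\ga+1)}$ for a constant $A>0$ to be fixed, both contributions become proportional to $\eta^{1/(\ga+1)}$:
\begin{equation*}
C_0\, \de^{-\ga}\,\eta + \ovr{K}\,\de \;=\; \bigl(C_0\, A^{-\ga} + \ovr{K}\, A\bigr)\,\eta^{\frac{1}{\ga+1}}.
\end{equation*}
We then take $\si = B\,\eta^{1/(\ga+1)}$, where $B>A$ is chosen large enough that $\underline{K}\, B > C_0\, A^{-\ga} + \ovr{K}\, A$; the simplest choice is to fix $A$ as the minimizer $A_* = (\ga C_0/\ovr{K})^{1/(\ga+1)}$ and then pick $B = \max\{A_*, (C_0 A_*^{-\ga} + \ovr{K} A_*)/\underline{K}\} + 1$. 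Both $A$ and $B$ depend only on $N, f, d_\Om, r_\Om$, the regularity of $\Om$, $\max_{\ovr{\Om}} u$ and $\min_{\pa\Om} u_\nu$, so setting $C = B$ gives the constant appearing in \eqref{de leq si}.

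\textbf{Smallness threshold.} The choices above are admissible in Lemma \ref{lemma 34 ABR} only if $\si, \de \le r_\Om/16$. Since $\si = B\,\eta^{1/(\ga+1)}$, this amounts to requiring $\eta \le \ep$ with
\begin{equation*}
\ep \;:=\; \bigl(r_\Om/(16 B)\bigr)^{\ga+1},
\end{equation*}
which again depends only on the listed parameters. Under this condition, the strict inequality \eqref{K si geq} is verified by the choices above, Lemma \ref{lemma 34 ABR} applies, and \eqref{Om sig subset X de} holds, with $\de < \si < C\,\eta^{1/(\ga+1)}$ as asserted.

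\textbf{Expected main obstacle.} There is no serious analytic obstruction at this stage: the whole work has been absorbed into Proposition \ref{prop:new-prop-1} and Lemma \ref{lemma 34 ABR}, and the exponent $1/(\ga+1)$ is dictated by the scaling $\de^{-\ga}\eta$ versus $\de$. The only point requiring care is bookkeeping of the dependence of $C$ and $\ep$ on the geometric parameters (in particular through $\underline{K}, \ovr{K}$ and the Harnack constant entering $\ga$) to confirm that they do not secretly depend on $\om$ or on $\eta$; this is guaranteed because Proposition \ref{prop:new-prop-1} is uniform in $\om$.
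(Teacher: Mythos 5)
Your proof is correct and follows essentially the same route as the paper: choose $\de$ and $\si$ proportional to $[u_\nu]_{\pa\Om}^{1/(\ga+1)}$ so as to balance the two competing terms $C\de^{-\ga}[u_\nu]_{\pa\Om}$ and $\ovr{K}\de$ in \eqref{K si geq}, then pick $\ep$ so that the constraint $\si\le r_\Om/16$ of Lemma~\ref{lemma 34 ABR} is met. The only cosmetic difference is in the choice of constants: the paper fixes $\de$ by exactly equating $\ovr{K}\de = C\de^{-\ga}[u_\nu]_{\pa\Om}$ and then sets $\si=(4\ovr{K}/\underline{K})\de$, whereas you minimize the right-hand side over $\de$ and then choose $B>A$ accordingly; both give the same exponent and parameter dependence.
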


\begin{proof}
We must choose $\de, \si\le r_\Om/16$ that satisfy \eqref{K si geq}. 
If we let
\begin{equation*}
\de = \left(\frac{C}{\ovr{K}}\right)^{\frac{1}{\ga+1}} [u_\nu]_{\pa \Om}^{\frac{1}{\ga+1}}
\ \mbox{ and } \ 
\si = \frac{4\ovr{K}}{\underline{K}} \de,
\end{equation*}
since $\ovr{K}\,\de=C\,\de^{-\ga}\,[u_n]_{\pa\Om}$, then \eqref{K si geq} holds provided that
$\si\le r_\Om/16$, that is for
\begin{equation*}
\ep \leq \frac{\ovr{K}}{C} \left( \frac{r_\Om \underline{K}}{64\,\ovr{K}} \right)^{\ga +1},
\end{equation*}
The conclusion then follows from Lemma \ref{lemma 34 ABR}.
\end{proof}
\par
We are now ready to prove the main result of this paper.

\begin{proof}[Proof of Theorem \ref{stability serrin}]
The proof is analogous to that of \cite[Theorem 1]{ABR}; here, we use
\eqref{de leq si} in place of \cite[Eq. (33)]{ABR}. 
\par
Indeed, for any fixed direction $\om\in\SSN$, 
Proposition \ref{prop 5 ABR} implies that, if $\si$ satisfies \eqref{de leq si},  then for any $x \in \pa \Om$ there
exists $y_\om\in \pa \Om$ such that
\begin{equation} 
\label{x lam y}
|x^\lambda - y_\om | \leq 2 \sigma,
\end{equation}
where $x^\la$ is the reflection of $x$ in the critical hyperplane $\pi_\la$ with $\la$ defined by 
\eqref{lambda def} (see also \cite[Corollary 2]{ABR}). 
\par
We now choose $N$
orthogonal directions, say $e_1,\ldots,e_N$, denote by
$\pi_1,\ldots,\pi_N$ the corresponding critical hyperplanes and we place the origin $0$ of $\RN$
at the (unique) point in common to all the $\pi_j$'s. If we denote by
$R_j(x)$ the reflection of a point $x$ in $\pi_j$, we have that 
$$
-x= (\mathcal{R}_{N} \circ \cdots \circ \mathcal{R}_{1})(x);
$$
also, if we set $y_0=x$, $y_1=y_{e_1}$  and, for $j=2,\dots, N$,  $y_j$ as the point in $\pa\Om$ such
that $|\mathcal{R}_j(y_{j-1})-y_j|\le 2 \si$ determined by \eqref{x lam y}, we obtain that
\begin{multline*}
|-x-y_N|\le \\
|(\mathcal{R}_{N} \circ \cdots \circ \mathcal{R}_{1})(x)-\mathcal{R}_{N}(y_{N-1})|+
|\mathcal{R}_{N}(y_{N-1})-y_N|\le \\
|(\mathcal{R}_{N-1} \circ \cdots \circ \mathcal{R}_{1})(x)-y_{N-1}|+2 \si\le \\
|(\mathcal{R}_{N-2} \circ \cdots \circ \mathcal{R}_{1})(x)-y_{N-2}|+4 \si\le \cdots \le 2N\si.
\end{multline*}
\par
Thus, we showed that there exists $y=y_N\in\pa\Om$ such that
\begin{equation*} \label{Rx lam y}
|x +y | \leq 2 N \sigma.
\end{equation*}
This fact and \eqref{x lam y} imply that $0$ is an approximate
center of symmetry for $\Om$, in the sense of \cite[Proposition 6]{ABR}, that is, for any direction $\om$,  for the critical hyperplane $\pi$ in the direction $\om$ we have that
\begin{equation} \label{bound prop 6 ABR}
\dist(0,\pi) \leq 4N\,[1+d_\Om]\,\si.
\end{equation}
\par
By letting
\begin{equation*}
r_i=\min_{x \in \pa \Om} |x|, \quad \textmd{and }\ \ r_e=\max_{x \in \pa
\Om} |x|,
\end{equation*}
we obtain \eqref{stability-improved} thanks to \eqref{bound prop 6 ABR}, \cite[Proposition 7]{ABR} and \eqref{de leq si}.
\end{proof}
\par
Notice that Theorem \ref{stability serrin} was proved by fixing $a=1/2$ in Lemma \ref{lemma harnack cone}. However, an analog of Theorem \ref{stability serrin} for any fixed $a \in (0,1)$ can be proved by the same arguments and the number $\ga$ in \eqref{gamma}  and the exponent in \eqref{stability-improved} may be optimized in terms of $a$. It is clear that the constants $\ep$ and $C$ will change accordingly  (the smaller is $\ga$, the larger is $C$ and smaller is $\ep$). 
By using this plan, in case $\Om$ is convex and $u$ is a solution of the torsional rigidity problem  
--- i.e. when $f(u)=1$ in \eqref{serrin1} --- we are able to give more explicit formulas
and partially compare our results to those in \cite{BNST2}.

\begin{corollary}
Let $\Om$ be a bounded convex domain with boundary of class $C^{2,\al}$. Let $u$ be a solution of 
$$
\Delta u = -1 \ \mbox{ in } \ \Om, \quad u=0 \ \mbox{ on } \ \pa\Om.
$$
\par
Then, for any $\eta>0$ there exist constants $C$ and $\ep$, that
depend on $N, \Om,d_\Om, r_\Om$ and $\eta$,
such that
\begin{equation} \label{stability convex torsion}
r_e-r_i \leq C\,[u_\nu]_{\pa \Om}^{\frac{1}{\tau+\eta}},
\end{equation}
provided that $[u_\nu]_{\pa \Om} \leq \ep$, where
\begin{equation} \label{tau}
\tau = 1+N\,\sqrt{1+\left[\frac{2d_\Om}{r_\Om}\right]^2}.
\end{equation}
\end{corollary}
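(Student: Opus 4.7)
The plan is to specialize Theorem \ref{stability serrin} to the case $f\equiv 1$ and then optimize over the free parameters $a$ and $t$ that appear in the Harnack chain of Lemma \ref{lemma harnack cone} and in the definition \eqref{G def} of $G$, respectively. Since $f$ is constant, the coefficient $c(x)$ in \eqref{eq w semilinear} vanishes identically, so $w$ is harmonic in $\Om_\la$; hence the sharp Poisson-kernel estimate \eqref{Harnack Harmonic} applies in place of the generic $\frak{H}_a$, and the remark after Lemma \ref{lemma harnack cone} gives the explicit expression
\[
\gamma(a,\theta) \;=\; N\,\frac{\log\frac{1+a}{1-a}}{\log\frac{1+a\sin\theta}{1-a\sin\theta}}.
\]
Second, since $\Om$ is convex, Proposition \ref{proposition cone} certifies that $\Om$ is a $(\Ccal,\theta)$-domain for every $t\in(0,1/2)$, with
\[
\sin\theta \;\ge\; \frac{(1-t)\,r_\Om}{\sqrt{(1-t)^2 r_\Om^2 + 4\,d_\Om^2}}.
\]

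As noted by the authors immediately before the corollary, the proof of Theorem \ref{stability serrin} carries through with any fixed $a\in(0,1)$, and a symmetric remark applies to the parameter $t$: shrinking either one only worsens the constants $C$ and $\ep$ but leaves the structure of the argument intact. Thus, for \emph{any} admissible choice $(a,t)$, Theorem \ref{stability serrin} yields
\[
r_e - r_i \;\le\; C(a,t)\,[u_\nu]_{\pa\Om}^{1/(1+\gamma(a,\theta(t)))}.
\]

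The final step is a parameter-optimization step: I would show that, as $a\to 0^+$,
\[
\gamma(a,\theta) \;=\; N\,\frac{2a + O(a^{3})}{2a\sin\theta + O(a^{3})} \;\longrightarrow\; \frac{N}{\sin\theta},
\]
while, as $t\to 0^+$, the lower bound for $\sin\theta$ increases to $r_\Om/\sqrt{r_\Om^2+4d_\Om^2}$, so that $N/\sin\theta$ decreases to $N\sqrt{1+(2d_\Om/r_\Om)^{2}}=\tau-1$. Given $\eta>0$, I first pick $t$ small enough that $N/\sin\theta < \tau - 1 + \eta/2$, then pick $a$ small enough that $\gamma(a,\theta) < N/\sin\theta + \eta/2$; combining, $1+\gamma(a,\theta) < \tau + \eta$, so that the exponent satisfies $1/(1+\gamma) > 1/(\tau+\eta)$ and \eqref{stability convex torsion} follows with $C:=C(a,t)$ and $\ep:=\ep(a,t)$ for these specific choices.

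The main obstacle is a bookkeeping one rather than a conceptual one: I have to justify that the proof of Theorem \ref{stability serrin} really is uniform enough in $a$ and $t$ to yield the corresponding constants. This reduces to checking that the three ingredients that depend on $(a,t)$ — the Harnack chain estimate \eqref{Harnack x e y}, the boundary Harnack step producing \eqref{w Gm serrin} on $G_\la$, and the interior-sphere radius $(1-t)r_\Om$ of $G$ — remain controllable, with constants that blow up only as $(a,t)\to(0,0)$ but are finite for any fixed positive choice. All other pieces of the argument (in particular Lemma \ref{lemma 34 ABR} and Proposition \ref{prop 5 ABR}) are structurally independent of $a$ and $t$, so the final $C$ and $\ep$ in the corollary can be read off once the optimizing pair $(a,t)$ corresponding to $\eta$ is fixed.
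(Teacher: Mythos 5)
Your proposal follows the same route as the paper: specialize to $f\equiv 1$, use the explicit formula \eqref{gamma-torsion} for $\gamma$ (so that $\gamma$ is increasing in $a$ and the optimal exponent occurs in the limit $a\to 0^+$, where $\gamma\to N/\sin\theta$), and then invoke Proposition~\ref{proposition cone} to control $\sin\theta$. You are in fact more careful than the published argument at one point: the paper's proof of Proposition~\ref{prop:new-prop-1} fixes $t=1/32$ in \eqref{G def}, and with that fixed value Proposition~\ref{proposition cone} only gives $\sin\theta\ge (31/32)\,r_\Om\big/\sqrt{4d_\Om^2+(31/32)^2r_\Om^2}$, which produces $N/\sin\theta \le N\sqrt{1+[64d_\Om/(31 r_\Om)]^2}$, strictly larger than $\tau-1 = N\sqrt{1+(2d_\Om/r_\Om)^2}$. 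So to attain the corollary's exact $\tau$ with \emph{arbitrary} $\eta>0$ one must also let $t\to 0^+$, exactly as you do; the paper merely says \lq\lq the conclusion follows from Proposition~\ref{proposition cone}\rq\rq\ without making this explicit, and its preceding remark only advertises optimization in $a$. Your two-step choice (first $t$, then $a$) and the uniformity discussion are the right way to make this rigorous.

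One point you leave out, which the paper's proof does address in its final sentence: Theorem~\ref{stability serrin} produces constants $C,\ep$ that depend on $\max_{\ovr\Om}u$ and $\min_{\pa\Om}u_\nu$, but the corollary's statement lists only $N,\Om,d_\Om,r_\Om,\eta$. To reconcile these you must observe that when $f\equiv 1$ (so $f(0)=1>0$), both $\|u\|_\infty$ and a lower bound for $u_\nu$ on $\pa\Om$ are themselves controlled by $N,d_\Om,r_\Om$ and the $C^{2}$ regularity of $\Om$ via standard barrier comparisons in interior touching balls; otherwise your final $C,\ep$ formally carry a residual dependence on quantities the corollary promises to have eliminated. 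This is a small but genuine missing step.
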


\begin{proof}
We recall that, in the case in hand, $\ga$ is given by \eqref{gamma-torsion} and is increasing as $a$ 
grows; thus, the optimal exponent should be looked for when $a\to 0^+$.  Therefore, for
a $(\mathcal{C},\th)$-domain, the exponent in \eqref{stability-improved} behaves as 
$$
\frac1{1+\gamma}= \frac{1}{1+N/\sin\th+ o(1)} \ \mbox{ as } \ a\to 0^+.
$$ 
The conclusion then follows from Proposition \ref{proposition cone}.
\par
We finally notice that, in this case, the dependence of the constants $C$ and $\ep$ appearing
in Theorem \ref{stability serrin} on the quantities $\|u\|_\infty$ and $\min_{\pa\Om} u_\nu)$ can be removed, since these quantities can be bounded in terms of the $C^2$ regularity of $\Om$, by standard barrier techniques.
\end{proof}
\begin{remark}
In \cite[Theorem 2]{BNST2} the authors prove an estimate similar to \eqref{stability convex torsion}. When $\Om$ is convex, our estimate improves that in \cite[Theorem 2]{BNST2} if 
$$
\frac{d_\Om}{r_\Om} \leq \sqrt{\frac{2N^2+N-5/2}{N}}.
$$
\end{remark}

\section*{Acknowledgements} 
The authors wish to thank Paolo Salani for the useful discussions we had together.

The authors have been supported by the Gruppo Nazionale per l'Analisi Matematica, la Probabilit\`a e le loro
Applicazioni (GNAMPA) of the Istituto Nazionale di Alta Matematica (INdAM). 

The paper was completed while the author was visiting \lq\lq The Institute for Computational Engineering and Sciences\rq\rq (ICES) of The University of Texas at Austin, and he wishes to thank the Institute for hospitality and support. The author has been also supported by the NSF-DMS Grant 1361122 and the Firb project 2013 ``Geometrical and Qualitative aspects of PDE''.

\end{document}